\title{Solution to the Clebsch-Gordan problem for string algebras}
\author{Martin Herschend}
\newcommand{\edge}[1]{\ar@{-}[#1]}
\newcommand{\iso}{\;\tilde\rightarrow\;}
\newcommand{\arr}[3]{#1\overset{#2}{\rightarrow} #3}
\newcommand{\cat}[1]{\mathop{\rm #1}\nolimits}
\newcommand{\eg}{e.g. }
\newcommand{\ie}{i.e. }
\def\b{\mbox{\tiny$\bullet$}}
\newtheorem{thm}{Theorem}
\newtheorem{cor}{Corollary}
\newtheorem{pro}{Proposition}
\newtheorem{lem}{Lemma}
\theoremstyle{definition}
\newtheorem{exmp}{Example}
\begin{document}
\date{}
\maketitle

\begin{abstract}
\noindent 
The category of modules over a string algebra is equipped with a tensor product defined point-wise and arrow-wise in terms of the underlying quiver. In the present article we investigate how this tensor product interacts with the classification of indecomposables. We apply the results obtained to solve the Clebsch-Gordan problem for string algebras. Moreover, we describe the corresponding representation ring and tensor ideals in the module category.
\end{abstract}

\noindent 
Keywords: string algebra, quiver representation, tensor product, Clebsch-Gordan problem, representation ring,

\section{Introduction}
In their classification of Harish-Chandra modules over the Lorentz group \cite{gelfand68}, Gelfand and Ponomarev encountered and solved the following problem in linear algebra: Find a canonical form for nilpotent linear operators $a$ and $b$ satisfying $ab = ba = 0$. This problem is equivalent to classifying all indecomposable finite dimensional modules over the algebra $\Lambda$, generated by two elements $\alpha$ and $\beta$ subject to the relations $\alpha^n = \beta^n = \alpha\beta = \beta\alpha = 0$. As they showed the indecomposable $\Lambda$-modules are of two kinds, later called strings and bands.

Their techniques, including the classification in terms of strings and bands, was later shown to apply in wider generality by Ringel \cite{ringel75} and Donovan-Freislich \cite{donovan78}. The related Auslander-Reiten theory has also been investigated by Butler-Shahzamanian in \cite{butler80} and by Butler-Ringel in \cite{butler87}.

A quite general situation in which these techniques apply, is that of special biserial algebras. The finite type special biserial algebras were investigated by Skowronski-Wasch\"usch in \cite{skowronski83}, and the tame by Wald-Waschb\"usch in \cite{wald83}. An important feature of special biserial algebras is that they commonly appear as tame or finite type blocks of modular group algebras \cite{erdmann90}. 

String algebras are a slight specialisation of special biserial algebras. Although many of the results in the present article apply to special biserial algebras, we restrict our investigation to string algebras for sake of clarity.

The Clebsch-Gordan problem originates from the investigation of binary algebraic forms by Clebsch and Gordan \cite{clebsch66}. Today the most common incarnation of this problem is the following: Given two indecomposable representations of a group $G$ decompose their tensor product into a direct sum of indecomposables. However, the Clebsch-Gordan problem can be posed for any Krull-Schmidt category equipped with a tensor product. One such category is that of representations of a quiver with semi-monomial relations. It has a tensor product defined point-wise and arrow-wise \cite{herschend07},\cite{herschend07a}.

This version of the Clebsch-Gordan problem was first considered for the loop quiver. If the ground field is algebraically closed, the Clebsch-Gordan problem for the loop is equivalent to finding the Jordan normal form of the Kronecker product of any two Jordan blocks. It was solved in characteristic zero by Aitken \cite{aitken35}. Other independent solutions are due to Huppert \cite{huppert90} and Martsinkovsky-Vlassov \cite{MaVl}. In positive characteristic there is no known formula for the decomposition. However, algorithms for finding the decomposition exist \cite{iwamatsu07a}. Parts of the loop case turns out to be important for our investigation and we treat it briefly in the next section.

In finite type the Clebsch-Gordan problem has been solved for quivers of Dynkin type $\mathbb{A}$, $\mathbb{D}$ and $\mathbb{E}_6$ in \cite{herschend07b} and \cite{herschend08}. Known solutions in tame type exist for extended Dynkin quivers of type $\tilde{\mathbb{A}}$ \cite{herschend05} and for $\Lambda$ defined as above \cite{herschend07}. 

In this paper we solve the Clebsch-Gordan problem for the quivers with relations that correspond to string algebras. It should be noted that this class includes the tame cases mentioned above. Thus our results form a quite drastic generalisation of previously known solutions for tame type.

Theorem \ref{CG} presents the solution to the Clebsch-Gordan problem as a list of formulae describing the decomposition of the tensor product of any pair of indecomposables. Qualitatively, this result can be described as follows:
\begin{itemize}
\item A string tensor a string or band is isomorphic to a direct sum of strings.
\item The tensor product of two bands with non-isomorphic shapes is also isomorphic to a direct sum of strings.
\item The tensor product of two bands with the same shape is isomorphic to a direct sum of bands with the same shape and certain strings.
\end{itemize} 
To determine exactly which strings appear in the right hand side of these formulae is an easy combinatorial task in each specific case. However, some aspects of the general picture may be elusive. Therefore, we adopt certain alternative viewpoints on the problem to obtain more qualitative information. To be precise, Theorem \ref{ring} describes the representation ring, which has the Clebsch-Gordan formulae encoded in its multiplicative structure, and Theorem \ref{tensideal} describes the tensor ideals in the representation category. 

\subsection*{Acknowledgements}
This paper was written during my stay at Nagoya University, which was financed by the Japan Society for the Promotion of Science. I am grateful for their support. I would also like to thank my host Osamu Iyama for his hospitality and our many interesting discussions. I would also like to thank Ryan Kinser for pointing out a typo in an earlier version of the paper.

\section{Preliminaries}\label{prelim}
Throughout, fix an algebraically closed field $\Bbbk$. In this section we present some notation and terminology, much of which can be found in \cite{gabriel92}. A category $\mathcal{A}$ is called linear if the space of morphisms from $x$ to $y$, denoted $\mathcal{A}(x,y)$, is a vector space over $\Bbbk$ for all $x,y \in \mathcal{A}$ and all composition maps are bilinear. Let $\mathcal{A}$ and $\mathcal{B}$ be linear categories. A functor $F: \mathcal{A} \rightarrow \mathcal{B}$ is called linear if the induced maps between morphism spaces are linear.

A module over a small linear category $\mathcal{A}$ is a linear functor $m : \mathcal{A} \rightarrow \cat{Vec} \Bbbk$, where $\cat{Vec} \Bbbk$ denotes the category of vector spaces over $\Bbbk$. It is called (point-wise) finite dimensional if $\dim m (x) < \infty$ for all $x \in \mathcal{A}$. Morphisms of $\mathcal{A}$-modules are natural transformations. We denote the category of $\mathcal{A}$-modules by $\cat{Mod} \mathcal{A}$ and the full subcategory of all finite dimensional modules by $\cat{mod} \mathcal{A}$. Both of these categories are equipped with direct sums, taken object-wise and morphism-wise. In the sequel we assume that all modules are finite dimensional.

Let $\mathcal{A}$ be a small linear category. An ideal in $\mathcal{A}$ is a family of subspaces $\mathcal{I}(x,y) \subset \mathcal{A}(x,y)$, where $x,y \in \mathcal{A}$ such that $\mathcal{A}(y,z)\mathcal{I}(x,y)\mathcal{A}(w,x) \subset \mathcal{I}(w,z)$. The factor category $\mathcal{A}/\mathcal{I}$ has the same objects as $\mathcal{A}$ and morphism spaces defined by $(\mathcal{A}/\mathcal{I})(x,y) = \mathcal{A}(x,y)/\mathcal{I}(x,y)$. As is usual we identify $\cat{mod}\mathcal{A}/\mathcal{I}$ with the full subcategory of $\cat{mod}\mathcal{A}$ consisting of modules $m$ satisfying $m(\gamma) = 0$ for all $\gamma \in \mathcal{I}$.

To any linear functor $F: \mathcal{A} \rightarrow \mathcal{B}$ we associate its pull-up functor
\[
F^* : \cat{mod}\mathcal{B} \rightarrow \cat{mod}\mathcal{A}
\]
defined by $F^* m = m \circ F$ for all $\mathcal{B}$-modules $m$ and $(F\phi)_a = \phi_{Fa}$ for all $\mathcal{B}$-module morphisms $\phi$ and $a \in \mathcal{A}$. 

A quiver $Q$ consists of a set of vertices $Q_0$ and a set of arrows $Q_1$ together with two maps $t,h: Q_1 \rightarrow Q_0$ mapping each arrow $\alpha$ to its tail $t \alpha $ and head $h \alpha$ respectively. We denote the set of arrows from $x$ to $y$ by $Q_1(x,y)$ and write $\arr x \alpha y$ to indicate that $\alpha \in Q_1(x,y)$. We assume that all quivers are finite, \ie that $Q_0$ and $Q_1$ are finite sets.

A morphism from a quiver $P$ to a quiver $Q$ consists of two maps $f_0:P_0 \rightarrow Q_0$ and $f_1:P_1 \rightarrow Q_1$ such that $f_0(g \alpha) = g f_1(\alpha)$ for $g \in \{t,h\}$.

A path of length $d > 0$ in $Q$ from $x$ to $y$ is a sequence of arrows $\alpha_d\cdots \alpha_1$ such that $t \alpha_ 1 = x$, $h \alpha_i = t \alpha_{i+1}$ and $h \alpha_d = y$ for all $1 \leq i < d$. For each vertex $x$ there is also a path $e_x$ of length $0$ starting and ending at $x$. We denote the set of paths from $x$ to $y$ by $Q(x,y)$. The linear path category $\Bbbk Q$ is defined by the following properties. The objects in $\Bbbk Q$ are the vertices of $Q$. For all $x,y \in Q_0$ the space of morphisms from $x$ to $y$ has $Q(x,y)$ as basis. Composition of paths is given by concatenation.

Any $\Bbbk Q$-module $m$ is determined by the collection of vector spaces $(m(x))_{x \in Q_0}$ and linear maps $(m(\alpha))_{\alpha \in Q_1}$. Such a collection of vector spaces and linear maps is called a representation of $Q$. In fact this correspondence determines an equivalence between $\cat{mod} \Bbbk Q$ and the category of representations of $Q$.

Let $m$ and $n$ be $\Bbbk Q$-modules. Their tensor product is defined by $(m \otimes n)(x) = m(x) \otimes n(x)$ for all $x \in Q_0$ and $(m \otimes n)(\alpha) = m(\alpha) \otimes n(\alpha)$ for all $\alpha \in Q_1$. Since the tensor product is defined point-wise and arrow-wise it is exact and commutes with direct sums. Moreover, it is commutative: $m \otimes n \iso n \otimes m$. 

Let $\mathcal{I}$ be an ideal in $\Bbbk Q$. The tensor product $m \otimes n$ of two $\Bbbk Q/ \mathcal{I}$-modules $m$ and $n$ is a $\Bbbk Q$-module but not necessarily a $\Bbbk Q/ \mathcal{I}$-module since it may happen that $(m \otimes n)(\gamma) \not = 0$ for some $\gamma \in \mathcal{I}$. We say that  $\mathcal{I}$ is semi-monomial if it is generated by zero relations and commutativity relations. In this case $m \otimes n$ is a $\Bbbk Q/ \mathcal{I}$-module for all $\Bbbk Q/ \mathcal{I}$-modules $m$ and $n$, as is shown in \cite{herschend07}. The Clebsch-Gordan problem for $\Bbbk Q/ \mathcal{I}$ is the problem of finding the decomposition of $u \otimes v$ for all indecomposable $\Bbbk Q/ \mathcal{I}$-modules $u$ and $v$. We quote the following proposition from \cite{herschend08}.

\begin{pro}\label{pullup}
Let $P$, $Q$ be quivers and $F : \Bbbk P \rightarrow \Bbbk Q$ be a linear functor that sends paths to paths. Then the induced pull-up functor
\[
F^* : \cat{mod} \Bbbk Q \rightarrow \cat{mod} \Bbbk P
\]
respects direct sums and tensor products in the sense that
\[
F^*(m \oplus n) = (F^*m) \oplus (F^*n)  
\]
and
\[
F^*(m \otimes n) = (F^*m) \otimes (F^*n).
\]
\end{pro}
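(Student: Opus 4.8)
The plan is to unwind the definitions and reduce everything to the behaviour of the point-wise and arrow-wise operations on representations. Recall that $F^*m = m\circ F$, that a $\Bbbk P$-module is by definition a linear functor, and that such a functor is determined by its values on the vertices and arrows of $P$ together with functoriality. Hence it suffices, in each of the two claimed equalities, to check that the two $\Bbbk P$-modules in question agree on vertices and on arrows. The first step is to record that for a vertex $x$ of $P$ one has $(F^*m)(x) = m(F(x))$, where $F(x)$ is again a vertex of $Q$, and that for an arrow $\alpha$ of $P$ one has $(F^*m)(\alpha) = m(F(\alpha))$, where $F(\alpha)$ is a \emph{path} in $Q$ (possibly of length different from $1$) by the hypothesis that $F$ sends paths to paths.

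The direct sum statement is then immediate. Since $(m\oplus n)(y) = m(y)\oplus n(y)$ for every vertex $y$ of $Q$ and, by functoriality together with the identity $(f'\oplus g')(f\oplus g) = (f'f)\oplus(g'g)$ for block-diagonal maps, $(m\oplus n)(\gamma) = m(\gamma)\oplus n(\gamma)$ for every path $\gamma$ in $Q$, we may substitute $y = F(x)$ and $\gamma = F(\alpha)$ to conclude that $F^*(m\oplus n)$ and $(F^*m)\oplus(F^*n)$ agree on all vertices and arrows of $P$, hence coincide.

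The tensor product statement needs one extra ingredient, which is really the only point of the proof: although $m\otimes n$ is defined only arrow-wise, the identity $(m\otimes n)(\gamma) = m(\gamma)\otimes n(\gamma)$ in fact holds for \emph{every} path $\gamma$ in $Q$. This follows by induction on the length of $\gamma$ using the interchange law $(f'\otimes g')(f\otimes g) = (f'f)\otimes(g'g)$ for composable linear maps, with base cases $\gamma = \alpha$ an arrow (the definition of the tensor product) and $\gamma = e_y$ a lazy path, where $(m\otimes n)(e_y) = \mathrm{id}_{m(y)\otimes n(y)} = \mathrm{id}_{m(y)}\otimes\mathrm{id}_{n(y)}$. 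It is essential here that $F$ carries each path to a single path and not merely to a linear combination of paths, since the displayed identity fails on sums of paths.

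Granting this, I evaluate both sides of the claimed equality. For a vertex $x$ of $P$, $F^*(m\otimes n)(x) = (m\otimes n)(F(x)) = m(F(x))\otimes n(F(x)) = (F^*m)(x)\otimes(F^*n)(x)$. For an arrow $\alpha$ of $P$, since $F(\alpha)$ is a path in $Q$, the extended formula gives $F^*(m\otimes n)(\alpha) = (m\otimes n)(F(\alpha)) = m(F(\alpha))\otimes n(F(\alpha)) = (F^*m)(\alpha)\otimes(F^*n)(\alpha)$. Thus $F^*(m\otimes n)$ and $(F^*m)\otimes(F^*n)$ agree as representations of $P$, and therefore as $\Bbbk P$-modules. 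I expect the only genuine obstacle to be the inductive extension of the arrow-wise tensor formula to arbitrary paths; once that is in place, the rest is just careful substitution of $F$ into the definitions.
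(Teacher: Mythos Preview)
Your argument is correct. The paper itself does not prove this proposition; it merely quotes it from \cite{herschend08}, so there is no proof in the paper to compare against. Your verification by evaluating both sides on vertices and arrows, together with the inductive extension of the identity $(m\otimes n)(\gamma)=m(\gamma)\otimes n(\gamma)$ from arrows to arbitrary paths via the interchange law, is precisely the expected argument. You also correctly isolate the one place where the hypothesis that $F$ sends paths to paths (rather than to linear combinations of paths) is used.
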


Let $Q$ be a quiver. To each subquiver $Q'$ of $Q$ we associate the characteristic representation $\chi_{Q'}^{Q} \in \cat{mod} \Bbbk Q$ defined by 
\[
\chi_{Q'}^{Q}(x) =  \begin{cases}
\Bbbk & \mbox{if } x \in Q'_0 \\ 
0 & \mbox{if } x \not\in Q'_0 \end{cases}
\]
and
\[
\chi_{Q'}^{Q}(\alpha) =  \begin{cases}
1_\Bbbk & \mbox{if } \alpha \in Q'_1 \\ 
0 & \mbox{if } \alpha \not\in Q'_1 \end{cases}
\]
The module $\chi_Q = \chi_Q^Q$ is a tensor identity in the sense that $\chi_Q \otimes m  \iso m$ for all $m \in \cat{mod}\Bbbk Q$. Moreover, any linear functor $F : \Bbbk P \rightarrow \Bbbk Q$ sending paths to paths satisfies $F^*\chi_Q \iso \chi_P$.

Let $Z$ be the double loop quiver, \ie $Z$ has one vertex $x$ and two arrows $\alpha, \beta$ (as a consequence $t \alpha = h \alpha =t \beta = h \beta = x$). Further, let $\mathcal{I}$ be the ideal defined by the relations $\alpha\beta = \beta\alpha = e_x$ and set $\Gamma = \Bbbk Z / \mathcal{I}$. We call $\Gamma$ the Laurent algebra. Each $\Gamma$-module $m$ is determined by the linear automorphism $m(\alpha)$, since $m(\beta) = m(\alpha)^{-1}$. As $\Bbbk$ is algebraically closed the indecomposable $\Gamma$-module are classified by the Jordan normal form. More precisely, for each $\lambda \in \Bbbk^{\iota} = \Bbbk \setminus \{0\}$ and positive integer $s$, let $b_{\lambda,s}$ be the $\Gamma$-module satisfying $b_{\lambda,s}(x) = \Bbbk^s$ and $b_{\lambda,s}(\alpha) = J_{\lambda}(s)$, where $J_{\lambda}(s)$ has the matrix
\[
\left[ \begin{array}{cccc} \lambda & 1 & & \\ & \ddots &\ddots &\\ & & \lambda & 1 \\ & & & \lambda \end{array} \right]
\]
in the standard basis of $\Bbbk^s$. Then the modules $b_{\lambda,s}$ classify all indecomposable $\Gamma$-modules.

The following theorem due to Aitken \cite{aitken35} solves the Clebsch-Gordan problem for $\Gamma$ in characteristic zero. In prime characteristic the solution is harder to describe (see \cite{iwamatsu07a} and \cite{herschend08a}).

\begin{thm}\label{jordan}
Let $\Bbbk$ be a field of characteristic zero. For all $\lambda,\mu \in \Bbbk^{\iota}$ and positive integers $s,t$, the following formula holds:
\[
J_{\lambda}(s) \otimes J_{\mu}(t)\iso \bigoplus_{k=0}^{min(s,t)-1}J_{\lambda\mu}(s+t-2k-1)
\]
\end{thm}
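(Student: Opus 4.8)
\emph{Sketch of proof.} The plan is to strip off the scalars $\lambda$ and $\mu$, then pass from unipotent to nilpotent operators via the exponential map --- the sole point where $\cha\Bbbk=0$ enters --- and finally invoke the classical Clebsch--Gordan decomposition for $\mathfrak{sl}_2$.

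First I would reduce to the case $\lambda=\mu=1$. Write $N_s=J_1(s)-I_s$ for the regular nilpotent Jordan block of size $s$. Since $\lambda^{-1}N_s$ is again regular nilpotent, hence similar to $N_s$, the matrix $J_\lambda(s)=\lambda(I_s+\lambda^{-1}N_s)$ is similar to $\lambda(I_s+N_s)=\lambda J_1(s)$. The tensor product in question is the Kronecker product, which satisfies $(\lambda A)\otimes(\mu B)=\lambda\mu\,(A\otimes B)$ and is compatible with simultaneous conjugation; therefore $J_\lambda(s)\otimes J_\mu(t)$ is similar to $\lambda\mu\,\bigl(J_1(s)\otimes J_1(t)\bigr)$, while $\lambda\mu\,J_1(n)$ is similar to $J_{\lambda\mu}(n)$. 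It thus suffices to establish $J_1(s)\otimes J_1(t)\iso\bigoplus_{k=0}^{\min(s,t)-1}J_1(s+t-2k-1)$.

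Because $\cha\Bbbk=0$, for a unipotent operator $u$ the series $\log u=\sum_{j\ge1}(-1)^{j+1}(u-I)^j/j$ is a finite sum defining a nilpotent operator, and $\exp$, $\log$ are mutually inverse bijections between the nilpotent and the unipotent operators on a fixed space, each preserving the Jordan type: on a single block one has $\log J_1(s)=N_s\,q(N_s)$ with $q(N_s)$ invertible, so $\log J_1(s)$ has the Jordan type of $N_s$, and both operations act blockwise. In particular $J_1(s)$ is similar to $\exp(N_s)$, and since $N_s\otimes I_t$ commutes with $I_s\otimes N_t$ we obtain $J_1(s)\otimes J_1(t)\sim\exp(N_s\otimes I_t)\exp(I_s\otimes N_t)=\exp\bigl(N_s\otimes I_t+I_s\otimes N_t\bigr)$. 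Hence the Jordan type of $J_1(s)\otimes J_1(t)$ equals that of the nilpotent operator $T:=N_s\otimes I_t+I_s\otimes N_t$ on $\Bbbk^s\otimes\Bbbk^t$.

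It remains to compute the Jordan type of $T$. I would identify $N_s$ with a lowering operator on the $s$-dimensional irreducible $\mathfrak{sl}_2(\Bbbk)$-module $V_s$, so that $T$ acts as the lowering operator on $V_s\otimes V_t$; the classical Clebsch--Gordan rule gives $V_s\otimes V_t\cong\bigoplus_{k=0}^{\min(s,t)-1}V_{s+t-2k-1}$, and on each $V_d$ the lowering operator is a single nilpotent block of size $d$. Reading off the Jordan type of $T$ and transporting it back through $\exp$ and the first reduction yields the formula. The step needing the most care is the exponential reduction: one must verify that $\exp$ really induces a bijection on operators respecting Jordan type and that $\exp(A)\exp(B)=\exp(A+B)$ for commuting nilpotent $A,B$ applies here --- facts that genuinely rely on $\cha\Bbbk=0$. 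A fully self-contained alternative is to bypass $\mathfrak{sl}_2$ and compute $\rank T^j$ directly in the model where $T$ is multiplication by $x+y$ on $\Bbbk[x,y]/(x^s,y^t)$; that binomial-coefficient computation then becomes the real, though elementary, work.
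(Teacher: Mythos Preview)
Your sketch is sound. The paper does not actually prove this theorem: it is quoted as a classical result due to Aitken (with further references to Huppert and Martsinkovsky--Vlassov for independent proofs) and is used only as input for the rest of the article. So there is no argument in the paper to compare yours against at the level of technique.

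That said, your route --- strip the eigenvalues, pass from unipotent to nilpotent via $\exp/\log$, and then read off the block sizes from the $\mathfrak{sl}_2$ Clebsch--Gordan rule --- is a standard and efficient modern argument, and each step is correct as you state it. The reductions you flag as needing care (that $\exp$ and $\log$ preserve Jordan type between nilpotent and unipotent operators because $\exp(N)-I=N\cdot u$ with $u$ invertible and commuting with $N$, and that $\exp(A+B)=\exp(A)\exp(B)$ for commuting nilpotents) are indeed the only places characteristic zero is used, and your treatment is honest about them. The alternative you mention --- computing $\rank T^j$ for $T$ acting as multiplication by $x+y$ on $\Bbbk[x,y]/(x^s,y^t)$ --- is closer in spirit to Aitken's original combinatorial approach and has the virtue of being entirely self-contained, at the cost of a binomial-coefficient computation replacing the appeal to representation theory.
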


The solution to the Clebsch-Gordan problem for string algebras turns out to depend on the solution for $\Gamma$. To simplify notation we define the non-increasing sequence of natural numbers $(l_k)_{k \in I_{st}}$ by
\[
J_{\lambda}(s) \otimes J_{\mu}(t)\iso \bigoplus_{k \in I_{st}} J_{\lambda\mu}(l_k).
\]
For dimension reasons we have $\sum_{k \in I_{st}} l_k = st$. If $\Bbbk$ has characteristic zero, then $(l_k)_{k \in I_{st}} = (s+t-1, s+t-3, \ldots, |s-t|+1)$ by Theorem \ref{jordan}.

For non-algebraically closed fields $\Gamma$-modules can be classified in terms of irreducible polynomials. In fact, replacing Jordan blocks with indecomposable automorphisms in the definition of bands gives a classification of modules over string algebras even in the non-algebraically closed case \cite{butler87}. The solution to the Clebsch-Gordan problem contained in the present article can also be adapted to the general setting and thus is reduced to the Clebsch-Gordan problem for $\Gamma$-modules. This problem has a general description for prefect fields that can be made explicit for real closed fields \cite{herschend08a}.

We will sometimes use the following matrix notation to define linear maps. Let $\{V_i\}_{i \in I}$ and $\{W_j\}_{j \in J}$ be two families of vector spaces. Further let $\iota_i : V_i \rightarrow\bigoplus_{i \in I} V_i$ be the canonical inclusion and $\pi_j : \bigoplus_{j \in J} W_j \rightarrow W_j$ the canonical projection. For any linear map $A : \bigoplus_{i \in I} V_i \rightarrow \bigoplus_{j \in J} W_j$ and $(i,j) \in I \times J$ set $A_{ji} = \pi_jA\iota_i : V_i \rightarrow W_j$. 

To reduce the amount of parentheses in our notation we declare that functors have higher precedence than the tensor product, which in turn has higher precedence than the direct sum. Thus $Fm \otimes n = (Fm) \otimes n$ and $m \otimes n \oplus u = (m \otimes n) \oplus u$.

We denote the disjoint union of two sets $X$ and $Y$ by $X \amalg Y$ and the cardinality of $X$ by $|X|$.

\section{Methods}
In this section we develop methods to define strings and bands in a uniform way that is compatible with the tensor product. Let $P$ and $Q$ be quivers. A functor $F: \Bbbk P \rightarrow \Bbbk Q$ is called a wrapping if it induces an injective map $P_1(x,y) \rightarrow Q_1(Fx, Fy)$, $\alpha \mapsto F\alpha$ for all $x,y \in P_0$. In particular $F$ is induced by a quiver morphism $P \rightarrow Q$.

For a wrapping $F: \Bbbk P \rightarrow \Bbbk Q$, define the push-down functor
\[
F_{*}: \cat{Mod} \Bbbk P \rightarrow \cat{Mod} \Bbbk Q 
\]
as follows. For each vertex $x' \in Q_0$ set
\[
(F_{*}m)(x') = \bigoplus_{Fx=x'}m(x).
\]
For each arrow $\arr {x'}{\alpha'}{y'}$ in $Q$ let
\[
(F_{*}m)(\alpha') : \bigoplus_{Fx=x'}m(x) \rightarrow \bigoplus_{Fy=y'}m(y)
\]
be the linear map $A$ satisfying  
\[
A_{yx} = \begin{cases}
m(\alpha) & \mbox{if }  \alpha \in P_1(x,y) \mbox{ and } F(\alpha) = \alpha'\\ 
0 & \mbox{otherwise.}\end{cases}
\]
It is well-defined since $F$ is a wrapping. If $\mathcal{I}$ is an ideal in $\Bbbk Q$, then we say that $F$ is compatible with $\mathcal{I}$ if all elements in $\mathcal{I}$ can be written as linear combinations of paths not of the form $F\gamma$, where $\gamma$ runs through all paths in $P$. In this case the push-down $F_{*}$ is also well-defined as a functor from $\cat{Mod} \Bbbk P$ to $\cat{Mod}\Bbbk Q / \mathcal{I}$.

Let $P,Q,R$ be quivers and $F : \Bbbk P \rightarrow \Bbbk Q$, $G : \Bbbk Q \rightarrow \Bbbk R$ wrappings. Then $GF$ is also a wrapping. Moreover, $(GF)^* = F^*G^*$ and $(GF)_* =G_*F_*$. If $P$ is a subquiver of $Q$ and $F$ is given by inclusion, then $GF$ is called the restriction of $G$ to $P$ and denoted by $G |_P$. The push-down functor respects direct sums. With respect to the tensor product we can say the following.

\begin{pro}\label{tens}
Let $m \in \cat{mod} \Bbbk P$, $n \in \cat{mod} \Bbbk Q$ and $F: \Bbbk P \rightarrow \Bbbk Q$ be a wrapping. Then
\[
F_*m \otimes n = F_*(m \otimes F^* n).
\]
\end{pro}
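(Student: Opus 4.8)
The plan is to verify the claimed equality of $\Bbbk Q$-modules directly, object by object and arrow by arrow, since both sides live in $\cat{mod}\Bbbk Q$ and everything in sight — push-down, pull-up, and tensor product — is defined point-wise and arrow-wise. The key observation is that for a vertex $x' \in Q_0$, $(F^*n)(x) = n(Fx) = n(x')$ for every $x$ with $Fx = x'$, so $(m \otimes F^*n)(x) = m(x) \otimes n(x')$, and hence
\[
F_*(m \otimes F^*n)(x') = \bigoplus_{Fx = x'} \bigl(m(x) \otimes n(x')\bigr) \iso \Bigl(\bigoplus_{Fx = x'} m(x)\Bigr) \otimes n(x') = (F_*m \otimes n)(x'),
\]
using that $\otimes$ commutes with direct sums in each variable. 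This gives a natural identification of the underlying graded vector spaces; the substance is to check that the two sides assign the same linear map to each arrow.

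First I would fix an arrow $\arr{x'}{\alpha'}{y'}$ in $Q$ and write out $(F_*m \otimes n)(\alpha')$ as a block matrix indexed by pairs $(x,y)$ with $Fx = x'$, $Fy = y'$: by definition of the push-down, the $(y,x)$-block of $(F_*m)(\alpha')$ is $m(\alpha)$ if there is an arrow $\arr{x}{\alpha}{y}$ in $P$ with $F\alpha = \alpha'$ and $0$ otherwise; tensoring with $n(\alpha')$ then gives $(F_*m)(\alpha') \otimes n(\alpha')$ with $(y,x)$-block equal to $m(\alpha)\otimes n(\alpha')$ in the first case and $0$ otherwise. Next I would compute the other side: $(m \otimes F^*n)(\alpha) = m(\alpha) \otimes (F^*n)(\alpha) = m(\alpha) \otimes n(F\alpha)$ for an arrow $\alpha$ of $P$, so when $F\alpha = \alpha'$ this is exactly $m(\alpha) \otimes n(\alpha')$; feeding $m \otimes F^*n$ through $F_*$ then produces a block matrix whose $(y,x)$-block is $(m\otimes F^*n)(\alpha) = m(\alpha)\otimes n(\alpha')$ precisely when $\arr{x}{\alpha}{y}$ in $P$ satisfies $F\alpha = \alpha'$, and $0$ otherwise. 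The two block matrices coincide block-by-block, which is the claim.

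The only genuine subtlety — and the step I expect to need the most care — is that the push-down's arrow maps are only well-defined because $F$ is a wrapping, \ie the map $\alpha \mapsto F\alpha$ is injective on $P_1(x,y)$; I should make sure the matching of blocks above is unambiguous, \ie that for a given $(x,y,\alpha')$ there is at most one $\alpha \in P_1(x,y)$ with $F\alpha = \alpha'$, so that "the $(y,x)$-block" is a single well-defined map on both sides rather than a sum over several arrows. Once this is noted, the computation is a routine bookkeeping of blocks. I would also remark that the canonical isomorphism $\bigl(\bigoplus_i V_i\bigr) \otimes W \iso \bigoplus_i (V_i \otimes W)$ used to identify the vertex spaces is natural in $W$, which is what makes the identification compatible with the arrow maps and hence an equality (not merely an isomorphism) under the standard conventions; if one prefers, the statement can be read as an isomorphism of $\Bbbk Q$-modules, which is all that is needed in the sequel.
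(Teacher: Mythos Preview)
Your proposal is correct and follows essentially the same approach as the paper's proof: verify the equality vertex-wise using distributivity of $\otimes$ over $\oplus$, then arrow-wise by matching the block matrices $A_{yx}$ against $B_{yx} \otimes n(\alpha')$ according to whether an $\alpha \in P_1(x,y)$ with $F\alpha = \alpha'$ exists. Your additional remarks on the wrapping condition and on naturality of the distributivity isomorphism are welcome clarifications but do not alter the argument.
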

\begin{proof}
For each $x' \in Q_0$ we have
\[\begin{split}
F_*(m \otimes F^* n)(x') &= \bigoplus_{Fx=x'}m(x) \otimes (F^*n)(x) = \bigoplus_{Fx=x'}m(x) \otimes n(Fx)\\
=  \bigoplus_{Fx=x'}m(x) \otimes n(x')& = \left(\bigoplus_{Fx=x'}m(x)\right) \otimes n(x') = (F_*m)(x') \otimes n(x').
\end{split}\]
Now let $\arr {x'}{\alpha'}{y'}$ be an arrow in $Q$,
\[
A = F_*(m \otimes F^* n)(\alpha') : \bigoplus_{Fx=x'}m(x) \otimes n(x') \rightarrow \bigoplus_{Fy=y'}m(y) \otimes n(y')
\]
and 
\[
B = (F_*m)(\alpha') : \bigoplus_{Fx=x'}m(x) \rightarrow \bigoplus_{Fy=y'}m(y).
\]
If there is $\alpha \in P_1(x,y)$ such that $F(\alpha) = \alpha'$, then
\[
A_{yx} = (m \otimes F^* n)(\alpha) = m(\alpha) \otimes n(\alpha') = B_{yx} \otimes n(\alpha').
\]
Otherwise $A_{yx} = 0 = B_{yx} \otimes n(\alpha')$. Hence $F_*(m \otimes F^* n)(\alpha') = (F_*m \otimes n)(\alpha')$.
\end{proof}

Let $P^1,P^2$ and $Q$ be quivers. Furthermore, let $F_1: \Bbbk P^1 \rightarrow \Bbbk Q$ and $F_2: \Bbbk P^2 \rightarrow \Bbbk Q$ be wrappings. We define the fibre product $P^1 \times P^2$ to be the quiver $P$, where $P_0 = \{ (x',x) \in P^1_0 \times P^2_0 \;|\; F_1x' = F_2x \}$, $P_1 = \{ (\alpha',\alpha) \in P^1_1 \times P^2_1 \;|\; F_1\alpha' = F_2\alpha \}$ and $g(\alpha',\alpha) = (g\alpha',g\alpha)$ for $g \in \{t,h\}$. It is endowed with two wrappings $\Pi_1 : \Bbbk P \rightarrow \Bbbk P^1$ and $\Pi_2 :\Bbbk P \rightarrow \Bbbk P^2$, defined by taking the first, respectively second component of the pairs of vertices and arrows. Thus we obtain a commutative diagram
\[
\xymatrix{ & \Bbbk P \ar[dl]_{\Pi_1}\ar[dr]^{\Pi_2} & \\
\Bbbk P^1 \ar[dr]_{F_1} & & \Bbbk P^2\ar[dl]^{F_2}\\ 
& \Bbbk Q &
}
\]
The following proposition relates the fibre product to pull-ups and push-downs.

\begin{pro}\label{com} Let $F_1: \Bbbk P^1 \rightarrow \Bbbk Q$ and $F_2: \Bbbk P^2 \rightarrow \Bbbk Q$ be wrappings. Then $F^*_1F_{2*} = \Pi_{1*}\Pi_2^*$.
\end{pro}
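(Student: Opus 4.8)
The plan is to verify the identity $F_1^* F_{2*} = \Pi_{1*} \Pi_2^*$ by evaluating both composite functors on an arbitrary module $m \in \cat{mod}\Bbbk P^2$ and checking agreement at each vertex of $P^1$ and on each arrow of $P^1$. First I would compute the left-hand side. For a vertex $x' \in P^1_0$, pulling back along $F_1$ gives $(F_1^* F_{2*} m)(x') = (F_{2*}m)(F_1 x') = \bigoplus_{F_2 x = F_1 x'} m(x)$, the sum ranging over $x \in P^2_0$. On the right-hand side, $(\Pi_2^* m)(p) = m(\Pi_2 p)$ for $p \in P_0$, and then $(\Pi_{1*}\Pi_2^* m)(x') = \bigoplus_{\Pi_1 p = x'} m(\Pi_2 p)$. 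By the definition of the fibre product, a point $p = (x', x) \in P_0$ with $\Pi_1 p = x'$ corresponds bijectively to a choice of $x \in P^2_0$ with $F_2 x = F_1 x'$, and for such $p$ we have $\Pi_2 p = x$. Hence the two direct sums are indexed by the same set and have the same summands, so the vertex values coincide. (Strictly one should fix once and for all the identification of the index sets; this is the only place where a choice is implicit, and it is canonical.)

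Next I would treat arrows. Fix an arrow $\arr{x'}{\alpha'}{y'}$ in $P^1$. On the left-hand side, $(F_1^* F_{2*} m)(\alpha') = (F_{2*}m)(F_1 \alpha')$, which by the definition of the push-down is the block matrix $A$ with $A_{yx} = m(\alpha)$ if there is an arrow $\arr{x}{\alpha}{y}$ in $P^2$ with $F_2 \alpha = F_1 \alpha'$, and $A_{yx} = 0$ otherwise; note that since $F_2$ is a wrapping such an $\alpha$ is unique when it exists. On the right-hand side, $(\Pi_{1*}\Pi_2^* m)(\alpha')$ is the block matrix $B$ whose block from the summand indexed by $p = (x', x)$ to the summand indexed by $q = (y', y)$ equals $(\Pi_2^* m)(\gamma) = m(\Pi_2 \gamma)$ if there is an arrow $\arr{p}{\gamma}{q}$ in $P$ with $\Pi_1 \gamma = \alpha'$, and $0$ otherwise. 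By the definition of the fibre product, an arrow $\gamma = (\alpha', \alpha) \in P_1$ with $\Pi_1 \gamma = \alpha'$ is exactly the same datum as an arrow $\arr{x}{\alpha}{y}$ in $P^2$ with $F_2 \alpha = F_1 \alpha'$; for such $\gamma$ we have $\Pi_2 \gamma = \alpha$, so the nonzero block of $B$ from $(x',x)$ to $(y',y)$ is $m(\alpha)$, matching $A_{yx}$. Therefore $A = B$ under the vertex identifications already established, and the two functors agree on arrows as well.

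Finally I would check that the two functors also agree on morphisms of $\Bbbk P^2$-modules (natural transformations), but this is immediate: both push-down and pull-up act on a morphism $\phi$ componentwise by copying the appropriate components $\phi_x$, and the bookkeeping of which components appear is governed by exactly the same index-set bijection as above, so the naturality squares match on the nose. I do not expect any genuine obstacle here; the statement is essentially a "base change" compatibility and the proof is a direct unwinding of the three definitions (push-down, pull-up, fibre product). The only point requiring a modicum of care — and the closest thing to a ``hard part'' — is to keep the identification of the double-sum index sets $\{x \in P^2_0 : F_2 x = F_1 x'\}$ and $\{p \in P_0 : \Pi_1 p = x'\}$ fixed and functorial throughout, so that the equalities of vertex spaces are genuine equalities (not merely isomorphisms) and the block-matrix comparison on arrows is literally an equality of linear maps, as the statement $F_1^* F_{2*} = \Pi_{1*}\Pi_2^*$ asserts.
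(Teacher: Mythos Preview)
Your proposal is correct and follows essentially the same approach as the paper's proof: both compute the two functors on an arbitrary module, identify the direct sums at vertices via the canonical bijection between $\{x \in P^2_0 : F_2 x = F_1 x'\}$ and $\{p \in P_0 : \Pi_1 p = x'\}$, and then compare block matrices on arrows using the definition of the fibre product. Your additional remarks on morphisms and on keeping the index-set identification fixed are sound but go slightly beyond what the paper spells out.
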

\begin{proof}
Set $P = P^1\times P^2$. Let $m \in \cat{mod} \Bbbk P^2$ and $x' \in P^1_0$. Then
\[
(F^*_1F_{2*} m)(x') = (F_{2*} m )(F_1 x') = \bigoplus_{F_2x = F_1x'} m(x)
\]
and
\[
(\Pi_{*1}\Pi^*_2 m)(x') =  \bigoplus_{\Pi_1z = x'} m(\Pi_2 z).
\]
However $\{ z \in P_0 \;|\; \Pi_1z = x' \} = \{ (x',x) \;|\; x \in P^2_0, \; F_1x' = F_2x \}$ and thus $(F^*_1F_{2*} m)(x') = (\Pi_{*1}\Pi^*_2 m)(x')$.

Let $\arr {x'}{\alpha'}{y'}$ be an arrow in $P^1$,
\[
A = (F^*_1F_{2*} m)(\alpha') : \bigoplus_{F_2x = F_1x'} m(x) \rightarrow \bigoplus_{F_2y = F_1y'} m(y)
\]
and
\[
B = (\Pi_{*1}\Pi^*_2 m)(\alpha') : \bigoplus_{F_2x = F_1x'} m(x) \rightarrow \bigoplus_{F_2y = F_1y'} m(y).
\]
Let $x,y \in P^2_0$. If there is $\alpha \in P^2_1(x,y)$ such that $F_2(\alpha) = F_1(\alpha')$, then $A_{yx} = m(\alpha)$. Moreover, the arrow $(\alpha',\alpha) \in P_1((x',x),(y',y))$ is mapped to $\alpha'$ by $\Pi_1$. Hence
\[
B_{yx} = (\Pi^*_2 m)(\alpha',\alpha) = m(\alpha) = A_{yx}.
\]
Otherwise there is no $\alpha \in P^2_1$ such  $(\alpha,\alpha') \in P_1((x,x'),(y,y'))$ and thus $A_{yx} = 0 = B_{yx}$. Hence $(F^*_1F_{2*} m)(\alpha') = (\Pi_{*1}\Pi^*_2 m)(\alpha')$.
\end{proof}
By symmetry we obtain $F^*_2F_{1*} = \Pi_{2*}\Pi_1^*$. It is worth noting that $\Pi_2^*F_2^* = (F_2\Pi_2)^* = (F_1\Pi_1)^* = \Pi_1^*F_1^*$ and similarly $F_{2*}\Pi_{2*}  = (F_2\Pi_2)_* = (F_1\Pi_1)_* = F_{1*}\Pi_{1*}$. Now we combine this result with the tensor product.

\begin{cor}\label{lift}
Let $m \in \cat{mod} \Bbbk P^1$ and $n \in \cat{mod} \Bbbk P^2$. Then
\[
F_{1*} m \otimes F_{2*} n = F_{1*}\Pi_{1*} (\Pi_1^* m \otimes \Pi_2^* n).
\]
\end{cor}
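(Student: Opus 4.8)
The plan is to derive the identity purely formally, by chaining together Proposition~\ref{tens} and Proposition~\ref{com}, invoking Proposition~\ref{tens} twice. No new matrix computation should be needed: everything reduces to the two results already established, together with the commutativity of the tensor product.

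First I would apply Proposition~\ref{tens} to the wrapping $F_1 : \Bbbk P^1 \to \Bbbk Q$, taking the first module to be $m \in \cat{mod}\Bbbk P^1$ and the second to be $F_{2*}n \in \cat{mod}\Bbbk Q$ (this lies in $\cat{mod}\Bbbk Q$ since $Q$ is finite). This gives
\[
F_{1*}m \otimes F_{2*}n = F_{1*}\bigl(m \otimes F_1^*F_{2*}n\bigr).
\]
Next I would rewrite the inner pull-up using Proposition~\ref{com}, which asserts $F_1^*F_{2*} = \Pi_{1*}\Pi_2^*$, so the right-hand side becomes $F_{1*}\bigl(m \otimes \Pi_{1*}\Pi_2^*n\bigr)$.

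The remaining step is to pull the push-down $\Pi_{1*}$ out past the tensor factor $m$, that is, to show $m \otimes \Pi_{1*}\Pi_2^*n = \Pi_{1*}\bigl(\Pi_1^*m \otimes \Pi_2^*n\bigr)$. This is again an instance of Proposition~\ref{tens}, now for the wrapping $\Pi_1 : \Bbbk P \to \Bbbk P^1$ with $P = P^1\times P^2$, applied to the modules $\Pi_2^*n \in \cat{mod}\Bbbk P$ and $m \in \cat{mod}\Bbbk P^1$; since Proposition~\ref{tens} puts the push-down factor on the left of the tensor sign, one either commutes the two factors before and after applying it, or simply observes that the point-wise and arrow-wise proof of Proposition~\ref{tens} is symmetric in its two arguments. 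Substituting this back into the display above yields
\[
F_{1*}m \otimes F_{2*}n = F_{1*}\Pi_{1*}\bigl(\Pi_1^*m \otimes \Pi_2^*n\bigr),
\]
which is the assertion.

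I do not expect a genuine obstacle: the entire content sits in Propositions~\ref{tens} and~\ref{com}, and the only thing to watch is bookkeeping, namely which module plays the role of $m$ and which the role of $n$ in each application of Proposition~\ref{tens}, and the harmless left/right asymmetry in its statement. It is worth recording, using the identities $F_{1*}\Pi_{1*} = (F_1\Pi_1)_* = (F_2\Pi_2)_* = F_{2*}\Pi_{2*}$ and $\Pi_1^*F_1^* = (F_1\Pi_1)^* = (F_2\Pi_2)^* = \Pi_2^*F_2^*$ noted after Proposition~\ref{com}, that the right-hand side is symmetric under interchanging the two factors, as it must be since the left-hand side is.
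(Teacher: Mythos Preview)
Your argument is correct and matches the paper's proof exactly: apply Proposition~\ref{tens} for $F_1$, then Proposition~\ref{com}, then Proposition~\ref{tens} again for $\Pi_1$. The only addition is your explicit remark about commuting the tensor factors in the second application, which the paper leaves implicit.
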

\begin{proof}
Applying propositions \ref{tens}, \ref{com} and then \ref{tens} again we obtain
\[
\begin{split}
F_{1*} m \otimes F_{2*} n &= F_{1*}( m \otimes F_1^*F_{2*} n) = F_{1*}( m \otimes \Pi_{1*}\Pi_2^*n) \\ 
&= F_{1*}\Pi_{1*} (\Pi_1^* m \otimes \Pi_2^* n).
\end{split}
\]
\end{proof}
Note that $F_{1*} m \otimes F_{2*} \chi_{P^2} = F_{1*}\Pi_{1*} (\Pi_1^* m \otimes \Pi_2^* \chi_{P^2}) = F_{1*}\Pi_{1*} (\Pi_1^* m \otimes \chi_{P}) = F_{1*}\Pi_{1*} \Pi_1^* m$. In particular $F_{1*} \chi_{P^1} \otimes F_{2*} \chi_{P^2} = F_{1*}\Pi_{1*}\chi_{P}$.

\begin{exmp} Let $P^1$ and $P^2$ be subquivers of a quiver $Q$. Further let $F_i : \Bbbk P^i \rightarrow \Bbbk Q$ be given by inclusion for $i \in \{1,2\}$. It follows that $F_{i*}\chi_{P^i} = \chi_{P^i}^{Q}$.

The fibre product $P = P^1 \times P^2$ is isomorphic to the intersection $P^1 \cap P^2$ and the wrappings $\Pi_i : \Bbbk P \rightarrow \Bbbk P^i$ for $i \in \{1,2\}$ correspond to the inclusions via this isomorphism. By Corollary \ref{lift}, $F_{1*}\chi_{P^1}\otimes F_{2*}\chi_{P^2} \iso F_{1*}\Pi_{1*}\chi_{P^1\cap P^2}$. In other words $\chi_{P^1}^{Q} \otimes \chi_{P^2}^{Q} \iso \chi_{P^1\cap P^2}^{Q}$. This result is proved directly in \cite{herschend07b}.
\end{exmp}

A wrapping $F:\Bbbk P \rightarrow \Bbbk Q$ is called strict if for all pairs of distinct arrows $\arr x \alpha y $ and $\arr {x'} {\alpha'} {y'} $ in $P$, $F\alpha = F\alpha'$ implies $x \not = x'$ and $y \not = y'$. In this case we call the pair $\mathbf{F} = (F, P)$ a shape over $Q$. For example each subquiver of $Q$ gives rise to a shape over $Q$ via inclusion. Note that compositions of strict wrappings are strict wrappings. In particular the restriction of a strict wrapping is strict. 

Let $\mathbf{F_1} = (F_1, P^1)$ and $\mathbf{F_2} = (F_2, P^2)$ be shapes over $Q$. We say that they are isomorphic (via $\sigma : \Bbbk P^1 \rightarrow \Bbbk P^2$) if $\sigma$ is defined by a quiver isomorphism $P^1 \iso P^2$ and satisfies $F_1 = F_2 \circ \sigma$. We denote the isomorphism class of a shape $\mathbf{F}$ by $\overline{\mathbf{F}}$. 

Let $P = P^1 \times P^2$. Since $F_2$ is strict, so is $\Pi_1$ and by symmetry $\Pi_2$. Hence we obtain a strict wrapping $F_1\Pi_2 = F_2\Pi_1 : \Bbbk P \rightarrow \Bbbk Q$. We denote the shape $(F_1\Pi_2, P)$ by $\mathbf{F_1}\times\mathbf{F_2}$.

Let $R$ be the disjoint union of $P^1$ and $P^2$. The sum of $\mathbf{F_1} $ and $\mathbf{F_2}$ is defined to be the shape $\mathbf{F_1} + \mathbf{F_2}= (F, R)$ such that $F$ restricted to $P^1$ and $P^2$ is equal to $F_1$ and $F_2$ respectively.

If $P^1= P^2$ and $F_1 = F_2$, then the diagonal of $P$ is the subquiver $\Delta$ defined by $\Delta_0 = \{(x,x) \;|\; x \in P^1_0\}$ and $\Delta_1 = \{(\alpha,\alpha) \;|\; \alpha \in P^1_1\}$. Its compliment is the full subquiver of $(P \times P)_0 \setminus \Delta_0$. This terminology is justified by the following result.

\begin{pro}\label{diag}
Let $(F,P)$ be a shape over $Q$. Then the diagonal $\Delta$ is a component in $P \times P$, \ie $P \times P$ is the disjoint union of $\Delta$ its compliment.
\end{pro}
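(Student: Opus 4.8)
The plan is to show that the diagonal $\Delta$ is closed under the tail and head maps within $P\times P$ and, more importantly, that no arrow of $P\times P$ connects a vertex of $\Delta$ to a vertex outside $\Delta$; once both are established, $\Delta$ and its complement partition $P\times P$ into a disjoint union of subquivers, which is precisely the assertion.

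First I would recall the definition of $P\times P$ in the special case $P^1=P^2=P$, $F_1=F_2=F$: its vertices are pairs $(x,y)\in P_0\times P_0$ with $Fx=Fy$, and its arrows are pairs $(\alpha,\beta)\in P_1\times P_1$ with $F\alpha=F\beta$, with $g(\alpha,\beta)=(g\alpha,g\beta)$ for $g\in\{t,h\}$. Clearly $\Delta$ is a subquiver: if $(\alpha,\alpha)\in\Delta_1$ then $t(\alpha,\alpha)=(t\alpha,t\alpha)\in\Delta_0$ and similarly for $h$. It therefore remains to rule out ``mixed'' arrows. Suppose $(\alpha,\beta)$ is an arrow of $P\times P$ with, say, its tail in $\Delta_0$, i.e.\ $t\alpha=t\beta$. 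Since $(F,P)$ is a shape over $Q$, the wrapping $F$ is \emph{strict}: for distinct arrows $\arr{x}{\alpha}{y}$ and $\arr{x'}{\beta}{y'}$ in $P$, $F\alpha=F\beta$ forces $x\neq x'$ and $y\neq y'$. Contrapositively, if $F\alpha=F\beta$ and $t\alpha=t\beta$ (so $x=x'$), then $\alpha$ and $\beta$ cannot be distinct, hence $\alpha=\beta$ and the arrow lies in $\Delta_1$, so its head $(h\alpha,h\beta)=(h\alpha,h\alpha)$ is in $\Delta_0$ as well. The symmetric argument using $h\alpha=h\beta$ handles the case where the head of $(\alpha,\beta)$ lies in $\Delta_0$. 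Thus any arrow of $P\times P$ with an endpoint in $\Delta_0$ lies entirely in $\Delta$, and dually any arrow with an endpoint outside $\Delta_0$ lies entirely in the complement.

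Putting this together: every vertex of $P\times P$ is in $\Delta_0$ or its complement $\Delta_0^{c}$, every arrow of $P\times P$ is in $\Delta_1$ or in the arrow set of the full subquiver on $\Delta_0^{c}$ (it cannot straddle the two vertex sets by the previous paragraph), and no arrow of $\Delta$ shares a vertex with the complement. Hence $P\times P=\Delta\amalg\Delta^{c}$ as quivers, which is exactly the claim that $\Delta$ is a component. The one point that deserves care — and which I expect to be the only real content of the argument — is extracting from the strictness hypothesis the conclusion that $F\alpha=F\beta$ together with agreement of \emph{one} endpoint already forces $\alpha=\beta$; everything else is a direct unwinding of the definitions of $P\times P$ and of a subquiver.
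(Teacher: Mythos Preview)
Your proof is correct and follows essentially the same approach as the paper: both argue that an arrow $(\alpha,\beta)$ of $P\times P$ with one endpoint in $\Delta_0$ must satisfy $\alpha=\beta$ by the strictness of $F$, and hence lie entirely in $\Delta$. The paper's version is terser but the content is identical.
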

\begin{proof}
If $(\alpha, \alpha') \in (P \times P)((x,x),(y,y'))$ then $F(\alpha) = F(\alpha')$. But $t \alpha = x = t \alpha'$ and so, since $F$ is strict $\alpha = \alpha'$ and $y = y'$. The case $(\alpha, \alpha') \in (P \times P)((x,x'),(y,y))$ is similar.
\end{proof}

Let $\mathbf{F} = (F,P)$ be a shape over $Q$. By Proposition \ref{diag}, the quiver $P \times P$ is the disjoint union of the diagonal $\Delta$ and its complement $R$. Hence $\mathbf{F} \times \mathbf{F} = (F\Pi_1|_\Delta, \Delta) + (F\Pi_1|_R, R)$. Moreover, the canonical isomorphism $\Delta \iso P$ defines an isomorphism $(\Pi_1|_\Delta, \Delta) \iso (Id_P,P)$. Hence $(F\Pi_1|_\Delta, \Delta) \iso \mathbf{F}$.

\section{String categories}
In this section we apply the methods developed in the previous section to find the decomposition of tensor products of strings and bands, and thus solve the Clebsch-Gordan problem for string algebras. We state everything for small linear categories as opposed to algebras.
 
Throughout the rest of the paper let $Q$ be a quiver and $\Lambda = \Bbbk Q / \mathcal{I}$ for some semi-monomial ideal $\mathcal{I}$ in $\Bbbk Q$. Furthermore, assume that all shapes over $Q$ are compatible with $\mathcal{I}$. We say that $\Lambda$ is a string category if its morphism spaces are finite dimensional and the following conditions are satisfied:

\begin{enumerate}
\item The ideal $\mathcal{I}$ is generated by a set of paths.

\item Each vertex $x \in Q_0$ is the tail respectively head, of at most two arrows in $Q$, \ie $|g^{-1}x| \leq 2$ for $g \in \{t,h\}$.

\item For each arrow $\alpha \in Q_1$ there is at most one $\beta \in Q_1$ such that $\alpha \beta \not \in \mathcal{I}$ and at most one $\gamma \in Q_1$ such that $\gamma \alpha \not \in \mathcal{I}$.
\end{enumerate}

We proceed to introduce strings and bands. A quiver is called linear if it is of type Dynkin type $\mathbb{A}$, \ie it has underlying graph
\[
\xymatrix{\b \edge{r} & \cdots \edge{r}& \b }
\]
and cyclic if it is of extended Dynkin type $\tilde{\mathbb{A}}$, \ie it has underlying graph
\[
\xymatrix@R=9pt{& \b \edge{dl} &\\ \b \edge{r} & \cdots \edge{r}& \b \edge{ul}}
\]
A shape $\mathbf{F} = (F,L)$ over $Q$ is called linear if $L$ is linear. The string associated to a linear shape $\mathbf{F}$ is the $\Lambda $-module $S_\mathbf{F} = F_*\chi_L$.

A shape $\mathbf{G} = (G,Z)$ over $Q$ is called cyclic if $Z$ is cyclic and $\mathbf{G}$ has trivial automorphism group. Let $\lambda \in \Bbbk^{\iota}$, $s >0$ and $\gamma \in Z_1$. Define the $\Bbbk Z$-module $B = B(\lambda, s, \gamma)$ by $B(x) = \Bbbk^s$ for each $x \in Z_0$, $B(\alpha) = 1_{\Bbbk^s}$ for $\alpha \not = \gamma$ and $B(\gamma) = J_{\lambda}(s)$. The band associated to $(\mathbf{G},\lambda, s, \gamma)$ is the $\Lambda $-module $B_\mathbf{G}(\lambda, s,\gamma) = G_*B$. 

Let $\gamma' \in Z_1$. We say that $\gamma$ and $\gamma'$ are oriented equally if when cycling through the vertices of $Z$ we encounter $t\gamma$ and $h\gamma$ in the same order as we encounter $t\gamma'$ and $h\gamma'$. In that case $B_\mathbf{G}(\lambda, s,\gamma') \iso B_\mathbf{G}(\lambda, s,\gamma)$. Otherwise $B_\mathbf{G}(\lambda, s,\gamma') \iso B_\mathbf{G}(\lambda^{-1}, s,\gamma)$. 

The following classification of indecomposable modules over string categories follows from \cite{wald83}.

\begin{thm}\label{wald} 
Assume that $\Lambda$ is a string category. Then the strings and bands classify all indecomposable $\Lambda$-modules, \ie 
\begin{enumerate}
\item Every string and band is indecomposable.
\item Each indecomposable $\Lambda$-module is isomorphic to either a string or band.
\item No strings are isomorphic to bands. 
\item Two strings $S_F$ and $S_{F'}$ are isomorphic if and only if they have of isomorphic shapes.
\item Two bands $B_\mathbf{G}(\lambda, s,\gamma)$ and $B_{\mathbf{G}'}(\mu, t, \gamma')$ are isomorphic if and only if $s = t$ and their shapes are isomorphic via some  $\sigma$ such that $\lambda = \mu^r$, where $r = 1$ if $\sigma(\gamma)$ and $\gamma'$ are equally oriented and $r = -1$ otherwise.
\end{enumerate}
\end{thm}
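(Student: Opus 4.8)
The plan is to deduce the theorem from the classical classification of modules over string algebras \cite{wald83} (see also \cite{butler87}), by matching the shape-theoretic language of this paper with the usual combinatorics of strings and bands. The first step is to check that conditions (1)--(3) in the definition of a string category merely restate the classical axioms for a string algebra: (2) is the requirement that each vertex be the source and target of at most two arrows, (3) is the requirement that at most one arrow extend a given arrow on either side without meeting a relation, and (1), together with the assumed semi-monomiality of $\mathcal{I}$, forces $\mathcal{I}$ to be a monomial ideal generated by paths. Since $Q$ is finite, passing between the linear category $\Lambda$ and the corresponding algebra is harmless, so $\Lambda$ is a string algebra in the usual sense.

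Next I would identify the modules. A classical string module is attached to a reduced walk $w$ in $Q$ that avoids $\mathcal{I}$ and the formal inverses of its generators; it has a basis indexed by the vertices along $w$, with arrows acting by the evident shift maps. This is precisely the data of a linear shape $\mathbf{F} = (F,L)$ --- the linear quiver $L$ being the walk and the strict wrapping $F$ recording its embedding in $Q$ --- and unwinding the definition of $F_*$ gives $S_\mathbf{F} = F_*\chi_L$ isomorphic to the classical module $M(w)$; strictness of $F$ corresponds exactly to $w$ being reduced. Likewise a classical band module is attached to a primitive cyclic reduced walk $c$ together with $\lambda \in \Bbbk^{\iota}$ and $s > 0$, built by placing $\Bbbk^s$ at each vertex of the cycle with identity maps along all but one distinguished arrow, which acts by $J_\lambda(s)$; unwinding $G_*B$ with $B = B(\lambda,s,\gamma)$ yields $B_\mathbf{G}(\lambda,s,\gamma)$ isomorphic to the classical band module, where the cyclic shape $\mathbf{G} = (G,Z)$ encodes $c$. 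The requirement that $\mathbf{G}$ have trivial automorphism group matches the primitivity of $c$: a nontrivial automorphism of $Z$ commuting with $G$ would display $c$ as a proper power, and conversely. With these dictionaries in place, parts (1), (2) and (3) are immediate, since \cite{wald83} states that strings and bands are indecomposable, account for all indecomposables, and form disjoint families.

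For (4), the classical criterion is that $M(w) \cong M(w')$ if and only if $w'$ equals $w$ or its inverse; since a linear quiver has automorphism group of order at most two, generated by the order-reversal, this is exactly the statement that $\mathbf{F}$ and $\mathbf{F'}$ are isomorphic as shapes. For (5), classically two band modules agree if and only if $s = t$ and $(c',\mu)$ arises from $(c,\lambda)$ by a rotation --- which fixes the eigenvalue --- or by an inversion followed by a rotation --- which replaces $\lambda$ by $\lambda^{-1}$. Rotations and inversions of a cyclic walk are precisely the automorphisms of the underlying cyclic quiver $Z$, so a shape isomorphism $\sigma$ corresponds to such an operation; it preserves the orientation of the distinguished arrow exactly when the operation is a pure rotation, giving $\lambda = \mu$, and otherwise gives $\lambda = \mu^{-1}$, which is the stated dichotomy $\lambda = \mu^r$ with $r = \pm 1$; the remark preceding the theorem already absorbs the dependence on the choice of $\gamma$ into the choice of $\sigma$. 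The main obstacle is the bookkeeping in these last two parts: one must track carefully how automorphisms of the linear or cyclic quiver underlying a shape correspond to the inversion and rotation symmetries of the classical walks, and how $\gamma$ and $\lambda$ transform under them --- in particular verifying that triviality of the automorphism group of a cyclic shape is exactly the primitivity condition needed so that the bands of this paper coincide, without over- or under-counting, with the band modules of \cite{wald83}.
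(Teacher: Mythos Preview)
Your proposal is correct and matches the paper's approach exactly: the paper gives no proof at all, simply stating that the theorem ``follows from \cite{wald83}'', so what you have written is precisely the translation argument that the citation is meant to invoke. Your careful matching of linear/cyclic shapes with reduced/primitive cyclic walks, and of shape isomorphisms with the inversion and rotation symmetries of classical strings and bands, is exactly the bookkeeping the paper leaves implicit.
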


Observe that cyclic and linear quivers themselves define string categories. Thus the push-down functors corresponding to strings and bands in fact map the isoclasses of indecomposables to the isoclasses of indecomposables in $\cat{mod}\Lambda$.

To solve the Clebsch-Gordan problem for string categories it suffices to find the decomposition of tensor products of strings and bands by Theorem \ref{wald}. To do this we have to determine the fibre products of linear and cyclic shapes. We start with a small example to illustrate our strategy.

\begin{exmp}\label{lorentz} Assume that $Q$ is the double loop quiver
\[
\xymatrix{\b \ar@(dl,ul)^{\alpha}\ar@(dr,ur)_{\beta}}
\]
and that $\mathcal{I}$ is generated by the paths $\alpha^n$, $\beta^n$, $\alpha\beta$ and $\beta\alpha$. Consider the following two linear shapes (where the labelling indicates where the arrows are mapped rather than the arrows themselves):
\[
\xymatrix{ \mathbf{F}: & \b \ar[r]^{\alpha}& \b \ar[r]^{\alpha}& \b & \b \ar[l]_{\beta} \ar[r]^{\alpha}& \b \\
\mathbf{F'}: & \b & \b \ar[l]_{\beta} \ar[r]^{\alpha} & \b \ar[r]^{\alpha}& \b & \b \ar[l]_{\beta}& \b \ar[l]_{\beta}
}
\]
Their fibre product can be visualised in a rectangle together with $\mathbf{F}$ written vertically on the left and $\mathbf{F'}$ written horizontally below:

\[
\xymatrix{
 \b 
 & \b & \b & \b & \b & \b & \b \\
 \b\ar[u]^{\alpha}\ar[d]_{\beta} 
 & \b & \b\ar[ur]^{\alpha}\ar[dl]_{\beta}  & \b\ar[ur]^{\alpha} & \b & \b\ar[dl]_{\beta}  & \b\ar[dl]_{\beta}  \\
 \b 
 & \b & \b & \b & \b & \b & \b \\
 \b\ar[u]^{\alpha} 
 & \b & \b\ar[ur]^{\alpha} & \b\ar[ur]^{\alpha} & \b & \b & \b \\
 \b\ar[u]^{\alpha} 
 & \b & \b\ar[ur]^{\alpha} & \b\ar[ur]^{\alpha} & \b & \b & \b \\
& \b & \b\ar[l]_{\beta}\ar[r]^{\alpha} & \b\ar[r]^{\alpha} & \b & \b \ar[l]_{\beta}& \b\ar[l]_{\beta} \\
}
\]
Hence its connected components are linear shapes. More precisely, there are three unique components with the following respective shapes:
\[
\xymatrix{ \mathbf{F}_{\alpha\alpha\beta}: & \b \ar[r]^{\alpha}& \b \ar[r]^{\alpha}& \b & \b \ar[l]_{\beta} \\
\mathbf{F}_{\beta\alpha}: & \b & \b \ar[l]_{\beta} \ar[r]^{\alpha} & \b \\
\mathbf{F}_{\beta}: & \b & \b \ar[l]_{\beta}.
}
\]
Moreover, there are $3$ copies of $\mathbf{F}_\alpha : \arr \b \alpha \b$ and $15$ copies of the shape $\mathbf{F}_0$, which consists of a single vertex and no arrows. By Corollary \ref{lift},
\[
S_\mathbf{F} \otimes S_\mathbf{F'} \iso S_{\mathbf{F}_{\alpha\alpha\beta}} \oplus S_{\mathbf{F}_{\beta\alpha}} \oplus S_{\mathbf{F}_{\beta}} \oplus 3S_{\mathbf{F}_\alpha} \oplus 15S_{\mathbf{F}_0}.
\]
\end{exmp}
Our aim is to generalise this example to include any pair of linear or cyclic shapes for arbitrary $Q$ and $\mathcal{I}$. To do this we need some more terminology.

Let $\mathbf{F} = (F,P)$ and $\mathbf{F'} = (F',P')$ be shapes over $Q$. We say that $\mathbf{F'}$ factors through $\mathbf{F}$ if there is a strict wrapping $G: \Bbbk P' \rightarrow \Bbbk P$ such that the diagram
\[
\xymatrix{ & \Bbbk P' \ar[d]^{F'}\ar[dl]_{G} \\
\Bbbk P \ar[r]_{F} & \Bbbk Q}
\]
commutes. If there is such a $G$, that in addition is a monomorphism (\ie given by a monomorphism of quivers $P' \rightarrow P$) we say that $\mathbf{F'}$ is a subshape of $\mathbf{F}$. We make the following straightforward but never the less useful observation.

\begin{lem}\label{sub}
Let $\mathbf{F}= (F,P)$ be a linear shape over $Q$. If $\mathbf{F'} = (F',P')$ is a connected shape that factors through $\mathbf{F}$, then $\mathbf{F'}$ is a subshape of $\mathbf{F}$ and in particular $\mathbf{F'}$ is linear.
\end{lem}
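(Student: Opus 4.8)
The plan is to use the strict wrapping $G : \Bbbk P' \to \Bbbk P$ that witnesses ``$\mathbf{F'}$ factors through $\mathbf{F}$'' (so that $F' = F \circ G$) and to prove that $G$ is given by a monomorphism of quivers $P' \to P$; this is exactly the assertion that $\mathbf{F'}$ is a subshape of $\mathbf{F}$, and the linearity of $\mathbf{F'}$ then follows at once, since a connected subquiver of a linear quiver is again linear. I would exploit repeatedly that, $P$ being linear, its underlying graph is a finite path, hence a tree with no loops in which every vertex is incident to at most two arrows.

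First I would establish ``local injectivity'' of $G$: for each $x \in P'_0$, the map $G$ sends the arrows of $P'$ incident to $x$ injectively to the arrows of $P$ incident to $Gx$. Indeed, if $\alpha \neq \alpha'$ are arrows at $x$ with $G\alpha = G\alpha'$, then: if both have tail $x$, or both have head $x$, strictness of $G$ forces their tails (resp.\ heads) to differ, a contradiction; and if $x$ is the tail of one and the head of the other, then $G\alpha = G\alpha'$ has $Gx$ as both its tail and head, i.e.\ is a loop in $P$, which is impossible. I expect this step --- the one place where strictness, rather than merely being a wrapping, is essential --- to be the technical heart, although it is just a short case check.

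Next I would deduce that $G$ is injective on vertices. Suppose $Gx = Gy$ with $x \neq y$. Since $P'$ is connected, choose a shortest walk from $x$ to $y$ in $P'$; being shortest, it never immediately backtracks, so by the previous step its image is a non-backtracking walk of positive length in the underlying graph of $P$ whose two endpoints coincide --- that is, a non-backtracking closed walk of positive length in a tree. No such walk exists (a vertex at maximal distance from the base point along such a walk would have to be interior, with both of its walk-neighbours equal to its unique neighbour towards the base point, forcing a backtrack). This contradiction gives injectivity on vertices.

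Finally, for injectivity on arrows: if $\alpha \neq \alpha'$ are arrows of $P'$ with $G\alpha = G\alpha'$, then their images have the same unordered pair of endpoints, so by vertex-injectivity $\alpha$ and $\alpha'$ have the same pair of endpoints $\{u,v\}$; if they are parallel, injectivity of $P'_1(u,v) \to P_1(Gu,Gv)$ (part of being a wrapping) forces $\alpha = \alpha'$, and if they are antiparallel, $G\alpha = G\alpha'$ would again be a loop in $P$. Hence $G$ is a quiver monomorphism, so $\mathbf{F'}$ is a subshape of $\mathbf{F}$, and being isomorphic to a connected subquiver of the linear quiver $P$ it is linear. The only places that need genuine care are the loop cases in the two injectivity arguments and the elementary fact about closed walks in trees.
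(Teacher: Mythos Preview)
Your proof is correct and follows essentially the same route as the paper's: both reduce the claim to showing that the strict wrapping $G$ is injective on vertices, by taking a path in $P'$ between two vertices with the same image and arguing that its image in the tree $P$ must backtrack, which contradicts strictness (or produces a loop in $P$). The paper compresses all of this into two sentences, while you spell out the local-injectivity step and the case analysis explicitly; your more careful treatment of the loop cases is exactly what is hidden behind the paper's phrase ``there must be two arrows violating the condition that $G$ is strict.''
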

\begin{proof}
Consider a strict wrapping $G: \Bbbk P' \rightarrow \Bbbk P$. We claim that $G$ is a monomorphism. If $G$ maps $P'_0$ injectively into $P_0$ we are done since $G$ is a wrapping. Otherwise there is a linear subquiver of $P'$ connecting two vertices mapped to the same vertex in $P$. In this subquiver there must be two arrows violating the condition that $G$ is strict.
\end{proof}

Let $\mathbf{F} = (F,P)$ and $\mathbf{F'} = (F',P')$ be shapes over $Q$. Furthermore, let $\{P^j\}_{j \in J}$ be the set of connected components of $P \times P'$ and set $\mathbf{H}_j = (F\Pi_1|_{P^j}, P^j)$. Then the fibre product $\mathbf{F}\times \mathbf{F'} = \sum_{j \in J}\mathbf{H}_j$.  By abuse of language we call the shapes $\mathbf{H}_j$ connected components of $\mathbf{F}\times \mathbf{F'}$. Let $\mathcal{L}(\mathbf{F}, \mathbf{F'})$ be the set of those $\mathbf{H}_j$ that are linear. If $\mathbf{F} =\mathbf{F'}$, then the summand corresponding to the diagonal is denoted $\mathbf{\Delta}$.

\begin{pro}\label{fiber}
Let $\mathbf{F}$ and $\mathbf{F'}$ be shapes over $Q$. Set $\mathbf{G} = \mathbf{F}\times \mathbf{F'}$. Then the following statements hold
\begin{enumerate}
\item\label{fiber1} Each connected component of $\mathbf{G}$ factors through both $\mathbf{F}$ and $\mathbf{F'}$.

\item\label{fiber2} If one of $\mathbf{F}$ and $\mathbf{F'}$ is linear or if both are cyclic but not isomorphic then
\[
\mathbf{G} = \sum_{\mathbf{H} \in \mathcal{L}(\mathbf{F}, \mathbf{F'})} \mathbf{H}.
\]

\item \label{fiber3} If $\mathbf{F}$ is cyclic, then
\[
\mathbf{F} \times \mathbf{F} = \mathbf{\Delta} + \sum_{\mathbf{H} \in \mathcal{L}(\mathbf{F}, \mathbf{F})}\mathbf{H}.
\]
\end{enumerate}
\end{pro}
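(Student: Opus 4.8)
Part (1) is immediate from the definitions: for a connected component $P^j$ of $P\times P'$, the restricted projections $\Pi_1|_{P^j}:\Bbbk P^j\to\Bbbk P$ and $\Pi_2|_{P^j}:\Bbbk P^j\to\Bbbk P'$ are strict wrappings (restrictions of the strict wrappings $\Pi_1,\Pi_2$), and since $\mathbf H_j=(F\Pi_1|_{P^j},P^j)$ and $F\Pi_1=F'\Pi_2$ on $\Bbbk(P\times P')$, these exhibit $\mathbf H_j$ as factoring through $\mathbf F$ via $\Pi_1|_{P^j}$ and through $\mathbf F'$ via $\Pi_2|_{P^j}$. For the first alternative in (2), say with $\mathbf F$ linear, each component $\mathbf H_j$ is then a connected shape factoring through the linear shape $\mathbf F$, so Lemma \ref{sub} makes it a linear subshape; hence every component lies in $\mathcal L(\mathbf F,\mathbf F')$ and the claimed formula holds. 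The case where $\mathbf F'$ is linear is symmetric.

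The remaining content of (2) and all of (3) concern the fibre product of two cyclic quivers, and here the plan is first to pin down the possible components. A cyclic quiver has no loops and each vertex incident to exactly two edges; combining this with strictness of $F$ and $F'$ — a short case analysis at a vertex $(x,x')$ over the three local possibilities (source, sink, or one incoming and one outgoing arrow) — shows that every vertex of $P\times P'$ is incident to at most two edges. Thus each connected component is a single vertex, a linear quiver, or a cyclic quiver; equivalently, the non-linear components are precisely the cyclic ones. Next I would show that for a cyclic component $P^j$ the projection $\Pi_1|_{P^j}:P^j\to P$ is a covering of quivers: strictness of $F'$ forces local injectivity (two distinct arrows of $P^j$ at a common vertex sharing a first coordinate would, through the relation $F\alpha=F'\alpha'$ defining arrows of the fibre product, violate strictness of $F'$ or produce a loop in $P$), and a locally injective map from a cycle into the cycle $P$ must be surjective (its image is a connected subquiver of $P$; a proper one would be a path, and a vertex of $P^j$ over an endpoint of that path would have two incident arrows mapped injectively into a set of one edge, a contradiction). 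By symmetry $\Pi_2|_{P^j}:P^j\to P'$ is a covering as well.

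The crux is to show these coverings have degree one. I would encode a cyclic shape $\mathbf F$ by the cyclic sequence of vertex-images and of oriented arrow-images read once around $P$; triviality of its automorphism group says exactly that this decorated cyclic sequence has no nontrivial symmetry — in particular it is primitive, and it admits no orientation-reversing reflectional symmetry either, since such a symmetry would be a genuine quiver automorphism of $P$ fixing $F$. Now $F\Pi_1|_{P^j}=F'\Pi_2|_{P^j}$ says that the decorated cyclic sequence of $P^j$ is simultaneously the $d$-th power of that of $\mathbf F$, with $d$ the covering degree of $\Pi_1|_{P^j}$, and the $d'$-th power of that of $\mathbf F'$. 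A periodicity argument together with primitivity forces $|P_1|=|P'_1|$, forces $d=d'=1$, and identifies the two decorated sequences up to rotation; the cleanest implementation is to pass to the infinite line quiver covering the cycles and to observe that the two lifted projections differ by a quiver automorphism of that line fixing the lifted word, which must therefore be a translation by a multiple of $|P_1|$, or else an excluded reflection. Hence a cyclic component can exist only when $\mathbf F$ and $\mathbf F'$ are isomorphic shapes. For the second alternative of (2) this contradicts the hypothesis, so all components are linear. For (3), where $\mathbf F=\mathbf F'$, the discrepancy automorphism just described lies in the trivial automorphism group of $\mathbf F$, so $\Pi_1|_{P^j}=\Pi_2|_{P^j}$; the cyclic component then consists of the vertices $(x,x)$, i.e. it is the diagonal $\mathbf\Delta$, every other component is linear, and since $\mathbf\Delta\iso\mathbf F$ this is the asserted formula.

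The main obstacle is precisely that crux step: converting ``the cyclic shape has trivial automorphism group'' into the numerical statement that a cyclic component covers $P$ (and $P'$) only once. The delicate points are the interaction with orientations — rotational and reflectional symmetries of the defining word have to be handled separately because quiver morphisms preserve arrow directions — and the possibility that $F$ collapses some arrows of $P$ onto loops of $Q$; working on the universal covering quiver is the device I expect to keep this bookkeeping under control. The rest (parts (1), the linear half of (2), and assembling the summands) is routine once the component structure is in hand.
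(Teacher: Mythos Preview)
Your approach is correct and matches the paper's in substance. Part~(1) and the linear half of~(2) are handled identically. For the cyclic-times-cyclic case the paper does what you do, but with explicit indices rather than covering/word language: it enumerates the arrows of a cyclic component $R$ as $(\alpha_0,\alpha_0'),\dots,(\alpha_n,\alpha_n')$, observes that the first coordinates are $k$-periodic (with $k=|Z_1|$) and the second $l$-periodic, and then argues that if $l<n+1$ the shift $\alpha_i\mapsto\alpha_{i+l}$ defines a nontrivial automorphism of $\mathbf F$, contradicting the triviality hypothesis; hence $l=n+1$, symmetrically $k=n+1$, and an explicit isomorphism $Z\to Z'$ drops out, giving~(2), while in the case $\mathbf F=\mathbf F'$ this isomorphism is forced to be the identity, yielding the diagonal and~(3).

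Two remarks on your version. First, because the paper's discrepancy map is a cyclic shift by construction, reflections never enter the argument; your care about reflectional symmetries is not wrong but is an avoidable detour, and the universal-cover machinery is heavier than needed. Second, the passage from primitivity to covering degree one deserves one more sentence: primitivity of both words gives only $|P_1|=|P'_1|$, and you then need the observation that the arrows of a component $R$ are \emph{distinct} pairs in $P_1\times P'_1$, so that $|R_1|\le\mathrm{lcm}(|P_1|,|P'_1|)=|P_1|$, to conclude $d=1$. The paper's index bookkeeping encodes exactly this step.
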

\begin{proof}
Each connected component $\mathbf{H}=(H, R)$ of $\mathbf{G}$ factors through $\mathbf{F}$ and $\mathbf{F'}$ via $\Pi_1|_R$ and $\Pi_2|_R$ respectively. This proves statement \ref{fiber1}.

If one of $\mathbf{F}$ and $\mathbf{F'}$ is linear, then each connected component $\mathbf{H}$ of $\mathbf{G}$ factors through a linear wrapping by statement \ref{fiber1} and thus $\mathbf{H}$ is also linear by Lemma \ref{sub}.

Now assume that $\mathbf{F} = (F,Z)$ and $\mathbf{F'} = (F',Z')$ are cyclic. Let $\mathbf{H}=(H, R)$ be a connected component of $\mathbf{G}$. Consider the strict wrapping $\Pi_1|_R:\Bbbk R \rightarrow \Bbbk Z$. Let $z \in R_0$ and $x = \Pi_1z$. Since $\Pi_1$ is strict $|g^{-1}z| \leq |g^{-1}x|$ for $g \in \{t,h\}$. Hence $|t^{-1}z| + |h^{-1}z| \leq |t^{-1}x| + |h^{-1}x| = 2$. It follows that $R$ is linear or cyclic.

Assume that $R$ is cyclic. Then $R$ has the underlying graph
\[
\xymatrix{& (x_0,x_0') \edge{dl}_{(\alpha_0,\alpha_0')} &\\ (x_1,x_1') \edge{r}_{(\alpha_1,\alpha_1')} & \cdots \edge{r}_{(\alpha_{n-1},\alpha_{n-1}')}& (x_n,x_n') \edge{ul}_{(\alpha_n,\alpha_n')}}
\] 
Let $k$ and $l$ be the number of vertices of $Z$ and $Z'$ respectively. We claim that $k \leq n +1$ and $\alpha_i = \alpha_j $ if and only if $i \equiv j \mod k$. If $Z$ is a loop this is certainly true. 

Assume that $Z$ is not a loop. Also, set $\alpha_{n+1} = \alpha_0$ and $x_{n+1} = x_0$. If $\alpha_i = \alpha_{i+1}$ for some $0 \leq i \leq n$, then $\Pi_1(\alpha_i,\alpha_i') = \Pi_1(\alpha_{i+1},\alpha_{i+1}')$, and since $\Pi_1|_R:\Bbbk R \rightarrow \Bbbk Z$ is strict $(\alpha_i,\alpha_i') = (\alpha_{i+1},\alpha_{i+1}')$. Thus $R$ is a loop which is a contradiction since $Z$ is not a loop. 

For all $0 \leq i \leq n$ the two distinct arrows $\alpha_i$ and $\alpha_{i+1}$ are connected via the vertex $x_{i+1}$. Since $Z$ is cyclic it follows that the sequence of arrows $\alpha_0, \ldots, \alpha_n$ contains each arrow in $Z$ and repeats exactly at every $k$ steps. Hence $k \leq n +1$ and $\alpha_i = \alpha_j $ if and only if $i \equiv j \mod k$. By symmetry $l \leq n +1$ and $\alpha_i' = \alpha_j' $ if and only if $i \equiv j \mod l$.

Assume $l < n+1$. If $i \equiv j \mod l$, then $F(\alpha_i) = F'(\alpha_i') = F'(\alpha_{j}') = F(\alpha_{j})$ and $F(x_i) = F'(x_i') = F'(x_{j}') = F(x_{j})$. Hence we obtain an automorphism $\sigma: Z \rightarrow Z$ sending $\alpha_0$ to $\alpha_l$ such that $F \iso F \circ \sigma$. It is trivial since $\mathbf{F}$ has trivial automorphism group. Thus $\alpha_0 = \alpha_l$ and $(\alpha_0,\alpha_0') = (\alpha_{l},\alpha_{l}')$ which is a contradiction. Consequently, $l = n+1$ and by symmetry $k = n+1$.

Now we obtain an isomorphism $\sigma : Z \rightarrow Z'$ defined by $x_i \mapsto x_i'$ and $\alpha_i \mapsto \alpha_i'$. If no such isomorphism exists, then all connected components of $Z \times Z'$ are linear and statement \ref{fiber2} follows.

On the other hand, if $Z = Z'$ and $F = F'$, then $\sigma$ must be the identity since $\mathbf{F}$ has trivial automorphism group. Hence $R$ is the diagonal of $Z \times Z$ and all other connected components are linear. This yields statement \ref{fiber3}.
\end{proof}

\begin{thm}\label{CG}
For linear shapes $\mathbf{F}=(F,L)$, $\mathbf{F'}=(F',L')$, non-isomorphic cyclic shapes $\mathbf{G}=(G, Z)$, $\mathbf{G'}=(G', Z')$, scalars $\lambda, \mu \in \Bbbk^{\iota}$ and positive integers $s, t$, the following formulae hold:
\[\begin{split}
S_\mathbf{F} \otimes S_\mathbf{F'} &\iso \bigoplus_{\mathbf{H}\in \mathcal{L}(\mathbf{F}, \mathbf{F'})} S_\mathbf{H} 
\\
S_\mathbf{F} \otimes B_\mathbf{G}(\lambda, s, \gamma) &\iso \bigoplus_{\mathbf{H}\in \mathcal{L}(\mathbf{F}, \mathbf{G})} s S_\mathbf{H}
\\
B_\mathbf{G}(\lambda, s, \gamma) \otimes B_\mathbf{G'}(\mu, t, \gamma') &\iso \bigoplus_{\mathbf{H}\in \mathcal{L}(\mathbf{G}, \mathbf{G'})} st S_\mathbf{H}
\\
B_\mathbf{G}(\lambda, s, \gamma) \otimes B_\mathbf{G}(\mu, t, \gamma) &\iso \bigoplus_{k \in I_{st}} B_\mathbf{G}(\lambda\mu, l_k, \gamma) \oplus \bigoplus_{\mathbf{H}\in \mathcal{L}(\mathbf{G}, \mathbf{G})} st S_\mathbf{H}
\end{split}\]
\end{thm}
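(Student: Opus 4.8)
The plan is to reduce each of the four formulae to a single application of Corollary~\ref{lift} followed by the structural description of fibre products in Proposition~\ref{fiber}, plus two elementary ``local'' computations. Writing $S_\mathbf{F}=F_*\chi_L$, $S_{\mathbf{F'}}=F'_*\chi_{L'}$, $B_\mathbf{G}(\lambda,s,\gamma)=G_*B$ and $B_{\mathbf{G'}}(\mu,t,\gamma')=G'_*B'$ with $B=B(\lambda,s,\gamma)$ and $B'=B(\mu,t,\gamma')$, Corollary~\ref{lift} rewrites every tensor product as one push-down of a tensor product living on the fibre product $P$ of the two shapes; for instance $S_\mathbf{F}\otimes B_\mathbf{G}(\lambda,s,\gamma)=F_*\Pi_{1*}(\Pi_1^*\chi_L\otimes\Pi_2^*B)$ with $P=L\times Z$. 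The key formal point is that this splits over the connected components $\{P^j\}_{j\in J}$ of $P$: push-down respects direct sums and satisfies $(GF)_*=G_*F_*$, while $\chi_P$ and the pull-ups $\Pi_i^*(-)$ restrict component-wise and the tensor product is defined point- and arrow-wise. Hence each such product equals $\bigoplus_{j\in J}(F\Pi_1|_{P^j})_*\bigl((\Pi_1|_{P^j})^*m\otimes(\Pi_2|_{P^j})^*n\bigr)$, a sum of push-downs of tensor products on the component $P^j$, pushed along the shape $\mathbf{H}_j$.

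The two local computations are as follows. First, if $R$ is a linear quiver and $W:\Bbbk R\to\Bbbk Z$ is any wrapping, then every structure map of $W^*B(\lambda,s,\gamma)$ is $1_{\Bbbk^s}$ or $J_\lambda(s)$, hence invertible; a representation of a linear quiver all of whose maps are invertible is isomorphic to $d$ copies of $\chi_R$ (change bases along the underlying $\mathbb{A}$-quiver), where $d$ is the common vertex dimension, so $W^*B(\lambda,s,\gamma)\iso s\chi_R$. Second, on the cyclic quiver $Z$ itself, $B(\lambda,s,\gamma)\otimes B(\mu,t,\gamma)$ has all structure maps equal to $1_{\Bbbk^{st}}$ except the one at $\gamma$, which is $J_\lambda(s)\otimes J_\mu(t)$; since $Z$ with $\gamma$ removed is a connected linear quiver, any morphism to another $\Bbbk Z$-module of the same type is forced to be constant on vertices, and the sole remaining naturality condition is that this constant value conjugate $J_\lambda(s)\otimes J_\mu(t)$ to the corresponding map of the target. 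By the definition of $(l_k)_{k\in I_{st}}$ (Theorem~\ref{jordan} in characteristic zero) this gives $B(\lambda,s,\gamma)\otimes B(\mu,t,\gamma)\iso\bigoplus_{k\in I_{st}}B(\lambda\mu,l_k,\gamma)$ as $\Bbbk Z$-modules.

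Now I would assemble the four cases. For $S_\mathbf{F}\otimes S_{\mathbf{F'}}$ the remark after Corollary~\ref{lift} gives $F_*\Pi_{1*}\chi_P=\bigoplus_j S_{\mathbf{H}_j}$, and every $P^j$ is linear by Proposition~\ref{fiber}(\ref{fiber2}), so the sum is $\bigoplus_{\mathbf{H}\in\mathcal{L}(\mathbf{F},\mathbf{F'})}S_\mathbf{H}$. For $S_\mathbf{F}\otimes B_\mathbf{G}(\lambda,s,\gamma)$ the factor $(\Pi_1|_{P^j})^*\chi_L\iso\chi_{P^j}$ is the tensor identity on each (again linear) component, so component $j$ contributes $(F\Pi_1|_{P^j})_*\bigl((\Pi_2|_{P^j})^*B\bigr)\iso(F\Pi_1|_{P^j})_*(s\chi_{P^j})=sS_{\mathbf{H}_j}$ by the first local computation; summing gives $\bigoplus_{\mathbf{H}\in\mathcal{L}(\mathbf{F},\mathbf{G})}sS_\mathbf{H}$. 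The same argument, now with two band pull-ups and the identity $s\chi_{P^j}\otimes t\chi_{P^j}=st\chi_{P^j}$, proves the third formula, the components again being linear by Proposition~\ref{fiber}(\ref{fiber2}) since $\mathbf{G}$ and $\mathbf{G'}$ are not isomorphic. For $B_\mathbf{G}(\lambda,s,\gamma)\otimes B_\mathbf{G}(\mu,t,\gamma)$, Proposition~\ref{fiber}(\ref{fiber3}) splits $Z\times Z$ into the diagonal $\mathbf{\Delta}\iso\mathbf{G}$ and linear components; on the latter the previous argument gives $\bigoplus_{\mathbf{H}\in\mathcal{L}(\mathbf{G},\mathbf{G})}stS_\mathbf{H}$, and on the diagonal, transporting along the canonical isomorphism $\Delta\iso Z$ identifies $(G\Pi_1|_\Delta)_*$ with $G_*$ and the restricted tensor product with $B(\lambda,s,\gamma)\otimes B(\mu,t,\gamma)$, whence the second local computation contributes $\bigoplus_{k\in I_{st}}B_\mathbf{G}(\lambda\mu,l_k,\gamma)$. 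Adding the two contributions gives the last formula, and by Theorem~\ref{wald} all right-hand sides are genuine decompositions into indecomposables.

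I expect the main obstacle to be the bookkeeping in the last formula: verifying that push-down is additive over connected components of the source and commutes with the restrictions and pull-ups in the way claimed, and --- most delicately --- checking that transport of structure along $\Delta\iso Z$ really carries $(G\Pi_1|_\Delta)_*$ to $G_*$ and the restriction of $\Pi_1^*B\otimes\Pi_2^*B'$ to $\Delta$ to $B(\lambda,s,\gamma)\otimes B(\mu,t,\gamma)$ with exactly the arrow $\gamma$, so that the diagonal summand is literally a sum of bands of shape $\mathbf{G}$. The cyclic-component analysis already carried out inside Proposition~\ref{fiber} is what makes the string/band dichotomy come out cleanly; everything else here is a direct application of results established above.
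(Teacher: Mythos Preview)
Your proposal is correct and follows essentially the same route as the paper's proof: apply Corollary~\ref{lift}, split the resulting module over the connected components of the fibre product, invoke Proposition~\ref{fiber} to see which components are linear, note that on a linear component all the pulled-back maps are invertible so the module is a multiple of $\chi$ (your first local computation is exactly the paper's ``from the classification of indecomposable $\Bbbk P^j$-modules it follows that $m_j\iso d\chi_{P^j}$''), and treat the diagonal in the band-times-band case separately. The only cosmetic difference is that the paper cites \cite{herschend05} for the decomposition $B(\lambda,s,\gamma)\otimes B(\mu,t,\gamma)\iso\bigoplus_{k\in I_{st}}B(\lambda\mu,l_k,\gamma)$ on the cyclic quiver, whereas you supply the direct argument via the Jordan form of $J_\lambda(s)\otimes J_\mu(t)$.
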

\begin{proof}
Let $\mathbf{F_1} = (F_1,P^1)$ and $\mathbf{F_2} = (F_2,P^2)$ be shapes over $Q$. Further let $m_1 \in \cat{mod} \Bbbk P^1$ and $m_2 \in \cat{mod} \Bbbk P^2$. By Corollary \ref{lift},
\[
F_{1*} m_1 \otimes F_{2*} m_2 = F_{1*}\Pi_{1*} (\Pi_1^* m_1 \otimes \Pi_2^* m_2).
\]
Let $\{\mathbf{H}_j = (H_j,P^j)\}_{j \in J}$ be the connected components of $\mathbf{F_1} \times \mathbf{F_2}$ and  consider the module $m = \Pi_1^* m_1 \otimes \Pi_2^* m_2$. It decomposes into a direct sum
\[
m = \bigoplus_{j \in J} m_j,
\]
where $m_j$ has support $P^j$. Moreover, for each $(x_1,x_2) \in P^j_0$ and $(\alpha_1,\alpha_2) \in P^j_1$ we have $m_j(x_1,x_2) = m_1(x_1) \otimes m_2(x_2)$ and $m_j(\alpha_1,\alpha_2) = m_1(\alpha_1) \otimes m_2(\alpha_2)$. 

Assume that $P^j$ is linear. If all $m_1(\alpha_1)$ and $m_2(\alpha_2)$ are isomorphisms, then each vector space $m_1(x_1) \otimes m_2(x_2)$ has the same dimension $d$. From the classification of indecomposable $\Bbbk P^j$-modules it follows that $m_j \iso d \chi_{P^j}$ and 
\[
F_{1*}\Pi_{1*}m_j \iso d S_{\mathbf{H}_j}.
\]

Assume that $\mathbf{F_1} = \mathbf{F_2} = \mathbf{G}$, $m_1 = B(\lambda, s,\gamma)$, $m_2 = B(\mu, t,\gamma)$ and $P^j$ is the diagonal of $Z \times Z$. Then 
\[
\Pi_{1*} m_j = B(\lambda, s,\gamma) \otimes B(\mu, t,\gamma) \iso \bigoplus_{k \in I_{st}} B(\lambda\mu, l_k,\gamma),
\]
by the solution to the Clebsch-Gordan problem for cyclic quivers \cite{herschend05}. Hence 
\[
F_{1*}\Pi_{1*}m_j \iso \bigoplus_{k \in I_{st}}  B_\mathbf{G}(\lambda\mu, l_k,\gamma).
\]
The theorem now follows from Proposition \ref{fiber}.
\end{proof}
If two bands have isomorphic shapes they may be assumed have equal shapes by Theorem \ref{wald}. Hence Theorem \ref{CG} takes into account all relevant cases.

By Theorem \ref{CG}, the Clebsch-Gordan problem for string categories is reduced to determining $\mathcal{L}(\mathbf{F},\mathbf{F'})$ for each pair of linear or cyclic shapes $(\mathbf{F},\mathbf{F'})$. By Proposition \ref{fiber}, each linear shape $\mathbf{H} \in \mathcal{L}(\mathbf{F},\mathbf{F'})$ must factor through both $\mathbf{F}$ and $\mathbf{F'}$ (which for linear $\mathbf{F}$ (or $\mathbf{F'}$) means that $\mathbf{H}$ is a subshape by Lemma \ref{sub}). To find a completely explicit solution to the Clebsch-Gordan problem for string categories it remains to determine with what multiplicity each isomorphism class of linear shapes factoring through $\mathbf{F}$ and $\mathbf{F'}$ appears in $\mathcal{L}(\mathbf{F},\mathbf{F'})$. In each specific case this combinatorial task is quite easy to carry out, \eg as in Example \ref{lorentz}. To obtain better understanding of the general case we investigate the representation ring.

\section{Representation ring}\label{repring}
In this section we assume that $\Lambda$ is a string category. Note that rings are not assumed to have identity elements in this section. The representation ring $R(\Lambda)$ is the Grothendieck ring associated with the semi-ring of isoclasses of $\Lambda$-modules (with addition and multiplication given by direct sum and tensor product respectively). For a more detailed definition see \cite{herschend07b}. As an abelian group, $R(\Lambda)$ is freely generated by the isoclasses of indecomposables, \ie strings and bands. It has an identity element if and only if $\chi_Q$ annihilates $\mathcal{I}$, which in our setting implies that $\mathcal{I}$ is the zero ideal and the connected components of $Q$ are linear or cyclic.

Our description of $R(\Lambda)$ will depend on the representation ring of the Laurent algebra $R(\Gamma)$, which is described in \cite{herschend08a}. If $\Bbbk$ is of characteristic zero, then $R(\Gamma) \iso \mathbb{Z}\Bbbk^{\iota}[T]$, the polynomial ring in one variable with coefficients in the group ring of the group of invertible elements $\Bbbk^{\iota}$. 

Let $I$ be the $\mathbb{Z}$-span in $R(\Lambda)$ of all isoclasses of strings. By Theorem \ref{wald}, $I$ has a $\mathbb{Z}$-basis consisting of the elements $[S_\mathbf{F}]$ where $\overline{\mathbf{F}}$ ranges through $\mathcal{L}$, the set all isoclasses of linear shapes compatible with $\mathcal{I}$. By Theorem \ref{CG}, $I$ is an ideal in $R(\Lambda)$. 

To handle relationships between shapes over $Q$ that determine the formulae in Theorem \ref{CG}, define the partial order $\leq$ on $\mathcal{L}$ by $\overline{\mathbf{F'}} \leq \overline{\mathbf{F}}$ if $\mathbf{F'}$ is a subshape of $\mathbf{F}$. The information contained in this partial order can be refined as follows.

For every pair of shapes $\mathbf{F} = (F,P)$ and $\mathbf{F'} = (F',P')$ over $Q$ denote by $[\mathbf{F}: \nolinebreak\mathbf{F'}]$, the set of strict wrappings $G: \Bbbk P' \rightarrow \Bbbk P$ such that $F' = FG$. Furthermore, let $|\mathbf{F}:\mathbf{F'}|$ be the number of elements in $[\mathbf{F}:\mathbf{F'}]$. Thus $|\mathbf{F}:\mathbf{F'}|$ counts the number of ways that $\mathbf{F'}$ factors through $\mathbf{F}$.

For all $\overline{\mathbf{F}},\overline{\mathbf{F'}} \in \mathcal{L}$ the set $[\mathbf{F}:\mathbf{F'}]$ is non-empty if and only if $\overline{\mathbf{F}} \geq \overline{\mathbf{F'}}$. Moreover, $[\mathbf{F}:\mathbf{F}]$ is a singleton. 

\begin{lem}\label{pull}
Let $\mathbf{F_1} = (F_1,P^1)$, $\mathbf{F_2} = (F_2,P^2)$ and $\mathbf{F} = (F,P)$ be shapes over $Q$. Also, assume that $P$ is connected. Then there are bijections
\[
[\mathbf{F_1}:\mathbf{F}] \times [\mathbf{F_2}:\mathbf{F}] \rightarrow [\mathbf{F_1}\times \mathbf{F_2}:\mathbf{F}].
\]
\[
[\mathbf{F_1}:\mathbf{F}] \amalg [\mathbf{F_2}:\mathbf{F}] \rightarrow [\mathbf{F_1} + \mathbf{F_2}:\mathbf{F}].
\]
\end{lem}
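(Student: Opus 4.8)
The plan is to unwind the definitions of the fibre product $\mathbf{F_1}\times\mathbf{F_2}$ and the sum $\mathbf{F_1}+\mathbf{F_2}$ and to exhibit the two bijections by explicit formulae. For the first map, recall that $\mathbf{F_1}\times\mathbf{F_2} = (F_1\Pi_2, P)$ where $P = P^1\times P^2$ has vertices $(x^1,x^2)$ with $F_1x^1 = F_2x^2$ and arrows $(\alpha^1,\alpha^2)$ with $F_1\alpha^1 = F_2\alpha^2$, and $\Pi_i$ picks out the $i$-th component. Given strict wrappings $G_1 \in [\mathbf{F_1}:\mathbf{F}]$ and $G_2 \in [\mathbf{F_2}:\mathbf{F}]$, I would send $(G_1,G_2)$ to the map $G$ that on a vertex $x$ of $P$ returns $(G_1x, G_2x)$ and on an arrow $\alpha$ returns $(G_1\alpha, G_2\alpha)$. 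This lands in $P$ precisely because $F_1G_1 = F = F_2G_2$, so the two components are compatible; and $F(\Pi_1\circ\dots)$ wait — more precisely $(F_1\Pi_2)\circ G = F_2\circ(\Pi_2 G) = F_2 G_2 = F$, so $G\in[\mathbf{F_1}\times\mathbf{F_2}:\mathbf{F}]$. The inverse sends $G\in[\mathbf{F_1}\times\mathbf{F_2}:\mathbf{F}]$ to $(\Pi_1 G, \Pi_2 G)$; one checks $F_i(\Pi_i G) = (F_1\Pi_2)G = F$ using the commutativity $F_1\Pi_1 = F_2\Pi_2$ noted in the text. That these two assignments are mutually inverse is immediate since a map into a fibre product is determined by its two components.

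For the second map, $\mathbf{F_1}+\mathbf{F_2} = (F,R)$ where $R = P^1\amalg P^2$ and $F|_{P^i} = F_i$. Since $P$ is connected and $R$ is a disjoint union, any strict wrapping $G:\Bbbk P\to\Bbbk R$ must factor through exactly one of the two inclusions $\Bbbk P^i\hookrightarrow\Bbbk R$; I would send such a $G$ to the corresponding element of $[\mathbf{F_1}:\mathbf{F}]$ or $[\mathbf{F_2}:\mathbf{F}]$ according to which component its image lies in, and the inverse composes a given $G_i\in[\mathbf{F_i}:\mathbf{F}]$ with the $i$-th inclusion. The condition $FG = F$ transfers correctly because $F$ restricts to $F_i$ on $P^i$. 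This is again a disjoint-union bijection, routine once connectedness of $P$ is invoked.

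The only point requiring genuine care — and the step I expect to be the main obstacle — is checking that the maps in both directions actually preserve \emph{strictness} of the wrappings, not merely the wrapping and commutativity conditions. For the fibre-product bijection: if $G_1$ and $G_2$ are strict, is $G = (G_1,G_2)$ strict? Given distinct arrows $\alpha,\beta$ of $P$ with $G\alpha = G\beta$, one has $G_1\alpha = G_1\beta$ and $G_2\alpha = G_2\beta$; strictness of, say, $G_1$ forces $t\alpha\neq t\beta$ and $h\alpha\neq h\beta$ already in the first components, hence in $P$. So this direction is fine, and in fact one only needs one of $G_1,G_2$ strict. Conversely, if $G$ is strict one needs $\Pi_i G$ strict: here I would use that $\Pi_i$ is itself strict (proved in the text, since $F_{3-i}$ is — careful, strictness of $\Pi_1$ follows from strictness of $F_2$, which holds as $\mathbf{F_2}$ is a shape), and that a composition of strict wrappings is strict; but one must also confirm the \emph{wrapping} (injectivity on arrow sets between fixed vertices) is preserved, which again follows since both factors are wrappings. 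For the sum bijection strictness is preserved trivially because composing with an inclusion of a component changes neither the source nor the relevant incidence structure. Assembling these checks, both displayed maps are well-defined bijections, completing the proof.
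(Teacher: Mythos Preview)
Your proposal is correct and follows essentially the same approach as the paper: define the map into the fibre product componentwise as $G(x)=(G_1x,G_2x)$, $G(\alpha)=(G_1\alpha,G_2\alpha)$, invert it via $G\mapsto(\Pi_1G,\Pi_2G)$, and use connectedness of $P$ for the sum. The paper's write-up is terser---it dispatches strictness with the single line ``since $G_1$ is strict, so is $G$'' and leaves the inverse direction implicit---whereas you spell out the strictness checks in both directions and correctly note that $\Pi_iG$ is strict because it is a composition of strict wrappings; this extra care is warranted but does not change the argument. (Incidentally, the paper's notation $(F_1\Pi_2,P)$ for the fibre-product shape appears to be a typo for $(F_1\Pi_1,P)=(F_2\Pi_2,P)$, which you effectively correct mid-argument.)
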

\begin{proof}
We claim that for each pair of elements $G_1 \in [\mathbf{F_1}:\mathbf{F}]$ and $G_2 \in [\mathbf{F_2}:\mathbf{F}]$ there is a unique strict wrapping $G: \Bbbk P \rightarrow \Bbbk (P^1 \times P^2)$ such that the diagram
\[
\xymatrix@C=9pt{ & \Bbbk (P^1 \times P^2) \ar[dl]_{\Pi_1}\ar[dr]^{\Pi_2} & \\
\Bbbk P^1 \ar[dr]_{F_1} & \Bbbk P\ar[d]_{F} \ar[r]^{G_2}\ar[l]_{G_1}\ar[u]^{G}& \Bbbk P^2\ar[dl]^{F_2}\\ 
& \Bbbk Q &
}
\]
commutes. In particular there is a bijection $[\mathbf{F_1}:\mathbf{F}] \times [\mathbf{F_2}:\mathbf{F}] \rightarrow [\mathbf{F_1}\times \mathbf{F_2}:\mathbf{F}$].

Let $x \in P_0$ and $\alpha \in P_1$. The commutativity of the diagram is equivalent to $G(x) = (G_1(x), G_2(x))$ and $G(\alpha) = (G_1(\alpha), G_2(\alpha))$, which uniquely determines $G$. This $G$ is well-defined since $F_1G_1(x) = F(x) = F_2G_2(x)$ and $F_1G_1(\alpha) = F(\alpha) = F_2G_2(\alpha)$. Since $G_1$ is strict, so is $G$.

Since $P$ is connected, each element in $[\mathbf{F_1} + \mathbf{F_2}:\mathbf{F}]$ must have its image in $\Bbbk P^1$ or $\Bbbk P^2$. Hence there is a bijection $[\mathbf{F_1}:\mathbf{F}] \amalg [\mathbf{F_2}:\mathbf{F}] \rightarrow [\mathbf{F_1} + \mathbf{F_2}:\mathbf{F}]$. 
\end{proof}

Now let $\{\mathbf{H}_j\}_{j \in J}$ be the connected components of $\mathbf{F_1}\times \mathbf{F_2}$. By Lemma \ref{pull}, we have
\begin{equation}\label{mult}
|\mathbf{F_1}:\mathbf{F}| |\mathbf{F_2}:\mathbf{F}| = |\mathbf{F_1}\times \mathbf{F_2}:\mathbf{F}| = \left|\left(\sum_{j \in J} \mathbf{H}_j\right):\mathbf{F}\right| = \sum_{j \in J}| \mathbf{H}_j:\mathbf{F}|.
\end{equation}
The following theorem is the main result of this section. 

\begin{thm}\label{ring}
The ideal $I$ has a unique $\mathbb{Z}$-basis of pair-wise orthogonal idempotents $\{e_\mathbf{F} = e_{\overline{\mathbf{F}}}\}_{\overline{\mathbf{F}} \in \mathcal{L}}$, such that the following statements hold:
\begin{enumerate}
\item\label{ring1} For each linear shape $\mathbf{F}$
\[
[S_\mathbf{F}] = \sum_{\overline{\mathbf{F'}} \in \mathcal{L}} | \mathbf{F}:\mathbf{F'} | e_\mathbf{F'} = 
\sum_{\overline{\mathbf{F'}} \leq \overline{\mathbf{F}}} | \mathbf{F}:\mathbf{F'} | e_\mathbf{F'}.
\]
Moreover, $[S_\mathbf{F}]e_\mathbf{F'} = |\mathbf{F} : \mathbf{F'}| e_\mathbf{F'}$.

\item\label{ring2} For each cyclic shape $\mathbf{G} = (G, Z)$, $\gamma \in Z_1$, $\lambda \in \Bbbk^{\iota}$ and positive integer $s$
\[
[B_\mathbf{G}(\lambda, s,\gamma)]e_\mathbf{F'} = s |\mathbf{G} : \mathbf{F'}| e_\mathbf{F'}.
\]
\item\label{ring3} For each pair of non-isomorphic cyclic shapes $\mathbf{G_1} = (G_1, Z^1)$, $\mathbf{G_2} = (G_2, Z^2)$, $\gamma_1 \in Z^1_1$, $\gamma_2 \in Z^2_1$, $\lambda, \mu \in \Bbbk^{\iota}$ and positive integers $s,t$
\[
[B_\mathbf{G_1}(\lambda, s,\gamma_1)][B_\mathbf{G_2}(\mu, t,\gamma_2)] = \sum_{\overline{\mathbf{F'}} \in \mathcal{L}} s t |\mathbf{G_1}:\mathbf{F'}||\mathbf{G_2} : \mathbf{F'}| e_\mathbf{F'}.
\]
Moreover,
\[\begin{split}
[B_\mathbf{G_1}(\lambda, s,\gamma_1)][B_\mathbf{G_1}(\mu,t,\gamma_1)] &=
\sum_{k \in I_{st}}[B_\mathbf{G_1}(\lambda\mu, l_k ,\gamma_1)] + \\
&\sum_{\overline{\mathbf{F'}} \in \mathcal{L}} s t |\mathbf{G_1} : \mathbf{F'}| (|\mathbf{G_1} : \mathbf{F'}| -1) e_\mathbf{F'}.
\end{split}\]
\end{enumerate}
\end{thm}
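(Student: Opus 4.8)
\textbf{Proof proposal for Theorem \ref{ring}.}

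The plan is to first construct the idempotent basis of $I$ by ``inverting'' the change-of-basis between the strings $\{[S_\mathbf{F}]\}$ and the sought idempotents $\{e_\mathbf{F'}\}$, then to derive the multiplication formulae from this construction together with the Clebsch--Gordan formulae in Theorem \ref{CG} and the multiplicativity identity \eqref{mult}. Concretely, I would order the finite-rank pieces of $\mathcal{L}$ by the partial order $\leq$ (which is well-founded since subshapes have strictly fewer vertices unless equal), and note that the matrix $(|\mathbf{F}:\mathbf{F'}|)_{\overline{\mathbf{F}},\overline{\mathbf{F'}}}$ is upper-unitriangular with respect to $\leq$, because $|\mathbf{F}:\mathbf{F}| = 1$ and $|\mathbf{F}:\mathbf{F'}| = 0$ unless $\overline{\mathbf{F'}} \leq \overline{\mathbf{F}}$ (both facts are recorded in the paragraph before Lemma \ref{pull}). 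Hence it is invertible over $\mathbb{Z}$, and defining $e_\mathbf{F'}$ by the inverse change of basis gives the first equality in statement \ref{ring1} by construction; the second equality is just the support restriction $|\mathbf{F}:\mathbf{F'}| = 0$ for $\overline{\mathbf{F'}} \not\leq \overline{\mathbf{F}}$. Uniqueness of a $\mathbb{Z}$-basis of orthogonal idempotents in $I$ will follow once orthogonality and idempotency are established, since any other such family must be a sub-collection of primitive idempotents and a rank count forces equality.

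The heart of the argument is showing $[S_\mathbf{F}] e_\mathbf{F'} = |\mathbf{F}:\mathbf{F'}| e_\mathbf{F'}$, since this simultaneously encodes idempotency ($e_\mathbf{F}$ appearing in $[S_\mathbf{F}]$ with coefficient $1$), orthogonality, and statement \ref{ring1}. I would prove it by a downward induction on $\overline{\mathbf{F'}}$ relative to $\leq$, using the first string-string formula of Theorem \ref{CG}: expanding $[S_\mathbf{F}][S_\mathbf{F'}] = \sum_{\mathbf{H} \in \mathcal{L}(\mathbf{F},\mathbf{F'})}[S_\mathbf{H}]$, re-expressing each side in the $e$-basis, and invoking \eqref{mult} in the form $|\mathbf{F}:\mathbf{H'}|\,|\mathbf{F'}:\mathbf{H'}| = \sum_{j}|\mathbf{H}_j:\mathbf{H'}|$ (where the $\mathbf{H}_j$ are the connected components of $\mathbf{F}\times\mathbf{F'}$, and by Proposition \ref{fiber}\eqref{fiber2} the linear ones are exactly $\mathcal{L}(\mathbf{F},\mathbf{F'})$ when one factor is linear) to match coefficients of $e_\mathbf{H'}$ on both sides. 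This identifies the action of $[S_\mathbf{F}]$ on $I$ with the diagonal operator $e_\mathbf{F'} \mapsto |\mathbf{F}:\mathbf{F'}|e_\mathbf{F'}$, from which orthogonality $e_\mathbf{F}e_\mathbf{F'} = \delta_{\overline{\mathbf{F}},\overline{\mathbf{F'}}}e_\mathbf{F'}$ drops out by triangularity. Statement \ref{ring2} is then the analogous computation with the string-band formula $S_\mathbf{F}\otimes B_\mathbf{G}(\lambda,s,\gamma) \iso \bigoplus_{\mathbf{H}\in\mathcal{L}(\mathbf{F},\mathbf{G})} sS_\mathbf{H}$: one expands $[B_\mathbf{G}(\lambda,s,\gamma)]e_\mathbf{F'}$ after multiplying by a suitable $[S_\mathbf{F}]$, or more directly applies $[B_\mathbf{G}]$ against the basis $[S_\mathbf{F}]$ of $I$ and uses \eqref{mult} again with $\mathbf{F_1} = \mathbf{G}$, the band contributing an overall factor $s$ from its point-dimension.

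For statement \ref{ring3}, the non-isomorphic case is immediate from the band-band formula $B_\mathbf{G_1}(\lambda,s,\gamma_1)\otimes B_\mathbf{G_2}(\mu,t,\gamma_2) \iso \bigoplus_{\mathbf{H}\in\mathcal{L}(\mathbf{G_1},\mathbf{G_2})} st\,S_\mathbf{H}$ combined with statement \ref{ring1} applied to each $[S_\mathbf{H}]$ and the product form of \eqref{mult}; the product lands in $I$, so only the $e$-basis appears. For the same-shape case I would start from the last formula of Theorem \ref{CG}, $B_\mathbf{G_1}(\lambda,s,\gamma_1)\otimes B_\mathbf{G_1}(\mu,t,\gamma_1) \iso \bigoplus_{k\in I_{st}} B_\mathbf{G_1}(\lambda\mu,l_k,\gamma_1) \oplus \bigoplus_{\mathbf{H}\in\mathcal{L}(\mathbf{G_1},\mathbf{G_1})} st\,S_\mathbf{H}$, rewrite the string part in the $e$-basis, and then note that Proposition \ref{fiber}\eqref{fiber3} says the full fibre product $\mathbf{G_1}\times\mathbf{G_1}$ equals $\mathbf{\Delta} + \sum_{\mathbf{H}\in\mathcal{L}(\mathbf{G_1},\mathbf{G_1})}\mathbf{H}$ with $\mathbf{\Delta}\iso\mathbf{G_1}$; feeding this into \eqref{mult} with $\mathbf{F_1}=\mathbf{F_2}=\mathbf{G_1}$ gives $|\mathbf{G_1}:\mathbf{F'}|^2 = |\mathbf{G_1}:\mathbf{F'}| + \sum_{\mathbf{H}}|\mathbf{H}:\mathbf{F'}|$, so $\sum_{\mathbf{H}}|\mathbf{H}:\mathbf{F'}| = |\mathbf{G_1}:\mathbf{F'}|(|\mathbf{G_1}:\mathbf{F'}|-1)$, which is precisely the coefficient of $e_\mathbf{F'}$ in the stated formula; the band terms $\sum_{k}[B_\mathbf{G_1}(\lambda\mu,l_k,\gamma_1)]$ are carried over untouched. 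The main obstacle I anticipate is purely bookkeeping: making the induction on $\leq$ airtight (well-foundedness, the base case of minimal shapes, and the triangular invertibility over $\mathbb{Z}$), and being careful that $\mathcal{L}(\mathbf{F},\mathbf{F'})$ accounts for \emph{connected} components only while \eqref{mult} ranges over all components of $\mathbf{F}\times\mathbf{F'}$ — the non-linear components contribute nothing to $|\cdot:\mathbf{F}|$ since $\mathbf{F}$ is linear and cannot factor through a cyclic shape, so the sums do match, but this point deserves an explicit sentence.
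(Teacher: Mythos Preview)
Your proposal is correct and follows essentially the same approach as the paper: construct the idempotents by inverting the unitriangular matrix $(|\mathbf{F}:\mathbf{F'}|)$ (the paper packages this as an explicit ring isomorphism $\theta: I \to S$ onto a diagonal ring, but the content is identical), then derive all the multiplication formulae from Theorem \ref{CG} together with equation \eqref{mult}. The one notable variation is in the same-shape case of statement \ref{ring3}: you compute the string coefficient directly from Proposition \ref{fiber}\eqref{fiber3} and \eqref{mult} to obtain $\sum_{\mathbf{H}\in\mathcal{L}(\mathbf{G_1},\mathbf{G_1})}|\mathbf{H}:\mathbf{F'}| = |\mathbf{G_1}:\mathbf{F'}|(|\mathbf{G_1}:\mathbf{F'}|-1)$, whereas the paper instead multiplies both sides by $e_\mathbf{F}$, invokes statement \ref{ring2}, and uses the dimension identity $\sum_{k\in I_{st}} l_k = st$ --- both routes are valid and yours is marginally more direct.
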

\begin{proof}
Let $S$ be the ring having the $\mathbb{Z}$-basis $\{d_{\mathbf{F}}=d_{\overline{\mathbf{F}}}\}_{\overline{\mathbf{F}} \in \mathcal{L}}$, and multiplication defined by $d_{\overline{\mathbf{F}}}d_{\overline{\mathbf{F'}}} = \delta_{\overline{\mathbf{F}},\overline{\mathbf{F'}}}d_{\overline{\mathbf{F}}}$. Define the $\mathbb{Z}$-linear map $\theta : I \rightarrow S$ by $\theta[S_\mathbf{F}] = \sum_{\overline{\mathbf{F'}} \in \mathcal{L}} | \mathbf{F}:\mathbf{F'} | d_\mathbf{F'}$. It is well-defined since $| \mathbf{F}:\mathbf{F'} | = 0$ for all $\mathbf{F'} \not \leq \mathbf{F}$. 

Observe that
\[
\theta ([S_\mathbf{F_1}][S_\mathbf{F_2}]) = \sum_{\mathbf{H}\in \mathcal{L}(\mathbf{F_1}, \mathbf{F_2})} \theta [S_\mathbf{H}] = \sum_{\mathbf{H}\in \mathcal{L}(\mathbf{F_1}, \mathbf{F_2})}\sum_{\overline{\mathbf{F'}} \in \mathcal{L}}
|\mathbf{H}:\mathbf{F'}| d_\mathbf{F'}.
\]
By equation (\ref{mult}) we have that 
\[
\sum_{\mathbf{H}\in \mathcal{L}(\mathbf{F_1}, \mathbf{F_2})}
|\mathbf{H}:\mathbf{F'}| =  |\mathbf{F_1}:\mathbf{F'}| |\mathbf{F_2}:\mathbf{F'}|,
\]
and thus
\[\begin{split}
\theta[S_\mathbf{F_1}]\theta[S_\mathbf{F_2}] &= 
\left(\sum_{\overline{\mathbf{F'}} \in \mathcal{L}} | \mathbf{F_1}:\mathbf{F'} | d_\mathbf{F'}\right)
\left(\sum_{\overline{\mathbf{F'}} \in \mathcal{L}} | \mathbf{F_2}:\mathbf{F'} | d_\mathbf{F'}\right) \\
&= \sum_{\overline{\mathbf{F'}} \in \mathcal{L}} |\mathbf{F_1}:\mathbf{F'}| |\mathbf{F_2}:\mathbf{F'}| d_\mathbf{F'} = 
\theta ([S_\mathbf{F_1}][S_\mathbf{F_2}]).
\end{split}\]
Hence $\theta$ is a ring morphism. Since $| \mathbf{F}:\mathbf{F} | = 1$ we have
\[
\theta[S_\mathbf{F}] = d_\mathbf{F} + \sum_{\overline{\mathbf{F'}} < \overline{\mathbf{F}}} | \mathbf{F}:\mathbf{F'} | d_\mathbf{F'}.
\]
Define the $\mathbb{Z}$-linear map $\rho: S \rightarrow I$ recursively by
\[
\rho d_\mathbf{F} = [S_\mathbf{F}] - \sum_{\overline{\mathbf{F'}} < \overline{\mathbf{F}}} | \mathbf{F}:\mathbf{F'} | \rho d_\mathbf{F'}.
\]
We show by induction that $\theta\rho d_\mathbf{F} = d_\mathbf{F}$:
\[\begin{split}
\theta \rho d_\mathbf{F} &= \theta [S_\mathbf{F}] - \sum_{\overline{\mathbf{F'}} < \overline{\mathbf{F}}} | \mathbf{F}:\mathbf{F'} | \theta\rho d_\mathbf{F'} \\
&= d_\mathbf{F} + \sum_{\overline{\mathbf{F'}} < \overline{\mathbf{F}}} | \mathbf{F}:\mathbf{F'} | d_\mathbf{F'} - \sum_{\overline{\mathbf{F'}} < \overline{\mathbf{F}}} | \mathbf{F}:\mathbf{F'} | d_\mathbf{F'} = d_\mathbf{F}.
\end{split}\]
We also have
\[
\rho\theta[S_\mathbf{F}] = \rho d_\mathbf{F} + \sum_{\overline{\mathbf{F'}} < \overline{\mathbf{F}}} | \mathbf{F}:\mathbf{F'} | \rho d_\mathbf{F'} = [S_\mathbf{F}].
\]
Hence $\theta$ and $\rho$ are inverse to each other. Choosing $e_\mathbf{F} = \rho d_\mathbf{F}$ we see that statement \ref{ring1} is satisfied. Moreover, statement \ref{ring1} determines the elements $e_\mathbf{F}$ uniquely since they must satisfy the recurrence relation $e_\mathbf{F} = [S_\mathbf{F}] - \sum_{\overline{\mathbf{F'}} < \overline{\mathbf{F}}} | \mathbf{F}: \nolinebreak\mathbf{F'} | e_\mathbf{F'}$. Furthermore, note that
\[
[S_\mathbf{F}]e_\mathbf{F''} =  \sum_{\overline{\mathbf{F'}} \in \mathcal{L}} | \mathbf{F}:\mathbf{F'} | e_\mathbf{F'}e_\mathbf{F''} = | \mathbf{F}:\mathbf{F''}| e_\mathbf{F''}.
\]

Now consider $[B_\mathbf{G}(\lambda, s,\gamma)]e_\mathbf{F}$. If $\mathbf{F}$ has no proper linear subshapes, then $e_\mathbf{F} = [S_\mathbf{F}]$. Also, $\mathcal{L}(\mathbf{G},\mathbf{F})$ consists of
$|\mathbf{G}:\mathbf{F}|$ linear shapes all isomorphic to $\mathbf{F}$ and thus
\[
[B_\mathbf{G}(\lambda, s,\gamma)]e_\mathbf{F} = s|\mathbf{G}:\mathbf{F}|e_\mathbf{F}.
\]
We show by induction that $[B_\mathbf{G}(\lambda, s,\gamma)]e_\mathbf{F}= s |\mathbf{G}:\mathbf{F}| e_\mathbf{F}$ for all $\overline{\mathbf{F}} \in \mathcal{L}$:
\[\begin{split}
[B_\mathbf{G}(\lambda, s,\gamma)]e_\mathbf{F} 
&= [B_\mathbf{G}(\lambda, s,\gamma)][S_\mathbf{F}] - \sum_{\overline{\mathbf{F'}} < \overline{\mathbf{F}}} | \mathbf{F}:\mathbf{F'} | [B_\mathbf{G}(\lambda, s,\gamma)]e_\mathbf{F'} \\
&= \sum_{\mathbf{H}\in \mathcal{L}(\mathbf{G}, \mathbf{F})} s [S_\mathbf{H}] -
\sum_{\overline{\mathbf{F'}} < \overline{\mathbf{F}}} 
s | \mathbf{F}:\mathbf{F'} | |\mathbf{G}:\mathbf{F'}| e_\mathbf{F'} \\
&= s\left(
\sum_{\mathbf{H}\in \mathcal{L}(\mathbf{G}, \mathbf{F})}
\sum_{\overline{\mathbf{F'}} \in \mathcal{L}} | \mathbf{H}:\mathbf{F'} | 
e_\mathbf{F'} -\sum_{\overline{\mathbf{F'}} < \overline{\mathbf{F}}} | \mathbf{F}:\mathbf{F'} | |\mathbf{G}:\mathbf{F'}| e_\mathbf{F'} 
\right) \\
&= s\left(
\sum_{\overline{\mathbf{F'}} \in \mathcal{L}}
\left|\left(\sum_{\mathbf{H}\in \mathcal{L}(\mathbf{G}, \mathbf{F})}\mathbf{H}\right):\mathbf{F'} \right| 
e_\mathbf{F'} -\sum_{\overline{\mathbf{F'}} < \overline{\mathbf{F}}} | \mathbf{F}:\mathbf{F'} | |\mathbf{G}:\mathbf{F'}| e_\mathbf{F'} 
\right) \\
&= s\left(
\sum_{\overline{\mathbf{F'}} \in \mathcal{L}} | \mathbf{G}\times \mathbf{F}:\mathbf{F'} | 
e_\mathbf{F'} -\sum_{\overline{\mathbf{F'}} < \overline{\mathbf{F}}} | \mathbf{F}:\mathbf{F'} | |\mathbf{G}:\mathbf{F'}| e_\mathbf{F'} 
\right) \\
&= s\left(
\sum_{\overline{\mathbf{F'}} \leq \overline{\mathbf{F}}} |\mathbf{G}:\mathbf{F'}||\mathbf{F}:\mathbf{F'}|
e_\mathbf{F'} -\sum_{\overline{\mathbf{F'}} < \overline{\mathbf{F}}} | \mathbf{F}:\mathbf{F'} | |\mathbf{G}:\mathbf{F'}| e_\mathbf{F'} 
\right) \\
&= s |\mathbf{G}:\mathbf{F}||\mathbf{F}:\mathbf{F}| e_\mathbf{F} = s |\mathbf{G}:\mathbf{F}| e_\mathbf{F}.
\end{split}\]
Thus statement \ref{ring2} is proved.

To prove statement \ref{ring3}, write
\[
[B_\mathbf{G_1}(\lambda, s,\gamma_1)][B_\mathbf{G_2}(\mu, t,\gamma_2)] = \sum_{\overline{\mathbf{F'}} \in \mathcal{L}} \lambda_\mathbf{F'}e_\mathbf{F'},
\]
which is possible by Theorem \ref{CG}. Then for each $\overline{\mathbf{F}} \in \mathcal{L}$
\[
[B_\mathbf{G_1}(\lambda, s,\gamma_1)]e_\mathbf{F}[B_\mathbf{G_2}(\psi,\gamma_2)]e_\mathbf{F} = st |\mathbf{G_1}:\mathbf{F}| |\mathbf{G_2}:\mathbf{F}| e_\mathbf{F}.
\]
Hence
\[
\lambda_\mathbf{F} = st |\mathbf{G_1}:\mathbf{F}| |\mathbf{G_2}:\mathbf{F}|.
\]

Similarly write
\[
[B_\mathbf{G_1}(\lambda, s,\gamma_1)][B_\mathbf{G_1}(\mu, t,\gamma_1)] = \sum_{k \in I_{st}}[B_\mathbf{G_1}(\lambda\mu, l_k ,\gamma_1)] + \sum_{\overline{\mathbf{F'}} \in \mathcal{L}} \mu_\mathbf{F'}e_\mathbf{F'}.
\]
For each $\overline{\mathbf{F}} \in \mathcal{L}$
\[
[B_\mathbf{G_1}(\lambda, s,\gamma_1)]e_\mathbf{F}[B_\mathbf{G_1}(\mu, t,\gamma_1)]e_\mathbf{F} = st |\mathbf{G_1}:\mathbf{F}|^2 e_\mathbf{F}.
\]
On the other hand
\[\begin{split}
[B_\mathbf{G_1}(\lambda, s,\gamma_1)][B_\mathbf{G_1}(\mu,t,\gamma_1)]e_\mathbf{F} &= 
\sum_{k \in I_{st}}[B_\mathbf{G_1}(\lambda\mu, l_k ,\gamma_1)]e_\mathbf{F} + \mu_\mathbf{F}e_\mathbf{F} \\
&= (st |\mathbf{G_1}:\mathbf{F}| + \mu_\mathbf{F}) e_\mathbf{F}.
\end{split}\]
We obtain
\[
\mu_\mathbf{F} = st |\mathbf{G_1}:\mathbf{F}|(|\mathbf{G_1}:\mathbf{F}|-1).
\]
which yields statement \ref{ring3}. 
\end{proof}

Let $\mathcal{B}$ be the set of isoclasses of cyclic shapes.
\begin{cor}\label{poly}
In the notation above
\[
R(\Lambda)/I \iso \bigoplus_{\overline{\mathbf{G}} \in \mathcal{B}} R(\Gamma).
\]
\end{cor}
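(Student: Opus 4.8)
The plan is to realise $R(\Lambda)/I$ as a ring direct sum indexed by $\mathcal{B}$, each summand being canonically a copy of $R(\Gamma)$, reading off all the needed structure from Theorem~\ref{wald} and Theorem~\ref{CG}.

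First I would pin down the underlying abelian group. Since $R(\Lambda)$ is freely generated by the isoclasses of indecomposables and $I$ is the free $\mathbb{Z}$-submodule spanned by the string classes $[S_\mathbf{F}]$, the quotient $R(\Lambda)/I$ is free on the classes of bands. Every band has a cyclic shape, so for each $\overline{\mathbf{G}} \in \mathcal{B}$ I would fix a representative $\mathbf{G} = (G,Z)$ together with an arrow $\gamma \in Z_1$; by Theorem~\ref{wald} the bands with shape-isoclass $\overline{\mathbf{G}}$ are exactly the $B_\mathbf{G}(\lambda,s,\gamma)$ with $\lambda \in \Bbbk^{\iota}$, $s>0$, and they are pairwise non-isomorphic because $\mathbf{G}$ has trivial automorphism group (so the $\sigma$ occurring in Theorem~\ref{wald} is forced to be the identity, whence the parameters must agree). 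Writing $\overline{x}$ for the image of $x \in R(\Lambda)$ in $R(\Lambda)/I$ and letting $R_{\overline{\mathbf{G}}}$ be the $\mathbb{Z}$-span of $\{\overline{[B_\mathbf{G}(\lambda,s,\gamma)]}\}_{\lambda,s}$, we obtain $R(\Lambda)/I = \bigoplus_{\overline{\mathbf{G}} \in \mathcal{B}} R_{\overline{\mathbf{G}}}$ as abelian groups.

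Next I would upgrade this to a ring decomposition using Theorem~\ref{CG}. The third formula there shows that a band of shape $\mathbf{G_1}$ tensored with a band of shape $\mathbf{G_2}$ is a direct sum of strings whenever $\overline{\mathbf{G_1}} \ne \overline{\mathbf{G_2}}$, so that product lies in $I$ and hence $R_{\overline{\mathbf{G_1}}} R_{\overline{\mathbf{G_2}}} = 0$. The fourth formula gives
\[
\overline{[B_\mathbf{G}(\lambda,s,\gamma)]}\;\overline{[B_\mathbf{G}(\mu,t,\gamma)]} = \sum_{k \in I_{st}} \overline{[B_\mathbf{G}(\lambda\mu,l_k,\gamma)]} \in R_{\overline{\mathbf{G}}},
\]
so each $R_{\overline{\mathbf{G}}}$ is a (non-unital) subring and $R(\Lambda)/I$ is the direct sum of these subrings as a ring. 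It then remains to identify each $R_{\overline{\mathbf{G}}}$ with $R(\Gamma)$: since $R(\Gamma)$ is free on the classes $[b_{\lambda,s}]$ and, by the very definition of the sequence $(l_k)_{k \in I_{st}}$, satisfies $[b_{\lambda,s}][b_{\mu,t}] = \sum_{k\in I_{st}} [b_{\lambda\mu,l_k}]$, the assignment $[b_{\lambda,s}] \mapsto \overline{[B_\mathbf{G}(\lambda,s,\gamma)]}$ is a bijection of free generating sets that respects products, hence a ring isomorphism $R(\Gamma) \iso R_{\overline{\mathbf{G}}}$; assembling these over $\mathcal{B}$ yields $\bigoplus_{\overline{\mathbf{G}} \in \mathcal{B}} R(\Gamma) \iso R(\Lambda)/I$.

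The step I expect to require the most care is the bookkeeping in the first paragraph: verifying that the $\overline{[B_\mathbf{G}(\lambda,s,\gamma)]}$ really do form a $\mathbb{Z}$-basis of $R_{\overline{\mathbf{G}}}$ (one must invoke Theorem~\ref{wald} to exclude coincidences among bands sharing a shape) and that $R_{\overline{\mathbf{G}}}$, as a subring of $R(\Lambda)/I$, does not depend on the chosen representative $\mathbf{G}$ or arrow $\gamma$. Passing to an arrow of the opposite orientation replaces the parametrisation $\lambda$ by $\lambda^{-1}$, which is a ring automorphism of $R(\Gamma)$; so the subring is unchanged and the identification with $R(\Gamma)$ persists up to this automorphism, which is harmless for the statement.
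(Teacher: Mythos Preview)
Your proposal is correct and follows essentially the same route as the paper: fix a representative $(\mathbf{G},\gamma)$ for each $\overline{\mathbf{G}}\in\mathcal{B}$, send $[B_\mathbf{G}(\lambda,s,\gamma)]$ to $[b_{\lambda,s}]$ in the corresponding copy of $R(\Gamma)$, and check this is a ring isomorphism. The only cosmetic difference is that the paper cites Theorem~\ref{ring} (whose statement~\ref{ring3} already packages the band-times-band products modulo $I$), whereas you read the same information directly off the last two formulae of Theorem~\ref{CG}; since Theorem~\ref{ring} is itself derived from Theorem~\ref{CG}, the two arguments are equivalent.
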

\begin{proof}
For each $\overline{\mathbf{G}} = \overline{(H, Z)}\in \mathcal{B}$ fix some arrow $\gamma \in Z_1$. Define a $\mathbb{Z}$-linear map $R(\Lambda)/I \rightarrow \bigoplus_{\overline{\mathbf{G}} \in \mathcal{B}} R(\Gamma)$ by sending $[B_\mathbf{G}(\lambda, s,\gamma)]$ to $[b_{\lambda,s}]$ in the copy of $R(\Gamma)$ corresponding to $\mathbf{G}$. That this defines an isomorphism follows from Theorem \ref{ring}.
\end{proof}

By Corollary \ref{poly}, the ring $R(\Lambda)/I$ is isomorphic to the direct sum of a certain number of copies of $R(\Gamma)$. By Theorem \ref{ring}, the ideal $I$ has a canonical $\mathbb{Z}$-basis of orthogonal idempotents. To get a complete description of $R(\Lambda)$ it remains to describe the action of bands on $I$ and to determine the direct summand in the tensor product of two bands that lies in $I$. By Theorem \ref{ring}, this can be achieved by computing the numbers $|\mathbf{G}:\mathbf{F}|$, for all $\overline{\mathbf{G}} \in \mathcal{B}$ and $\overline{\mathbf{F}} \in \mathcal{L}$. 

For instance, if $\mathbf{G} = (G,Z)$ is such that $G$ is a monomorphism (which is always the case for $Q$ of type $\tilde{\mathbb{A}}$, where in fact $G$ can be taken to be the identity) then $|\mathbf{G}:\mathbf{F}| = 1$ for all $\mathbf{F}$ whose image lie in the image of $\mathbf{G}$ and $|\mathbf{G}:\mathbf{F}| = 0$ otherwise. In particular, for all $\lambda, \mu \in \Bbbk^{\iota}$, $s,t>0$ and $\gamma \in Z_1$ we obtain $[B_\mathbf{G}(\lambda, s,\gamma)][B_\mathbf{G}(\mu, t,\gamma)] = \sum_{k\in I_{st}}[B_\mathbf{G}(\lambda \mu, l_k,\gamma)]$. Moreover, if $\mathbf{F}$ factors through $\mathbf{G}$, then $[B_\mathbf{G}(\lambda, s,\gamma)]e_\mathbf{F} = s e_\mathbf{F}$. Otherwise $[B_\mathbf{G}(\lambda, s,\gamma)]e_\mathbf{F} = 0$

\section{Tensor ideals}
In the previous section we treated the Clebsch-Gordan problem for string categories by investigating the representation ring. Another approach is to consider so-called tensor ideals. These have been studied in other settings, usually for triangulated categories by various authors, \eg \cite{benson97} and \cite{balmer05}. In this section we will consider tensor ideals in $\cat{mod} \Lambda$, where they also make sense.

Let $X_\Lambda$ be the set of all isoclasses of indecomposable $\Lambda$-modules. For any subcategory $\mathcal{C}$ of $\cat{mod} \Lambda$, let $[\mathcal{C}]$ be the smallest full subcategory of $\cat{mod} \Lambda$ closed under isomorphisms and containing $\mathcal{C}$.

A full subcategory $\mathcal{U} \subset \cat{mod} \Lambda$ is called a tensor ideal if it is closed under isomorphisms, direct sums, direct summands and satisfies $m \otimes u \in \mathcal{U}$ for all $m \in \cat{mod} \Lambda$ and $u \in \mathcal{U}$. In particular $\mathcal{U}$ is uniquely determined by the set $\overline{\mathcal{U}} \subset X_\Lambda$ of isoclasses of indecomposables in $\mathcal{U}$.

Let $\mathcal{U}$ and $\mathcal{V}$ be tensor ideals. Their sum $\mathcal{U} + \mathcal{V}$ is the smallest tensor ideal containing both $\mathcal{U}$ and $\mathcal{V}$. It consists of all elements of the form $u \oplus v$, where $u \in \mathcal{U}$ and $v \in \mathcal{V}$. Moreover, $\overline{\mathcal{U} + \mathcal{V}} = \overline{\mathcal{U}} \cup \overline{\mathcal{V}}$. This notion generalises to sums of arbitrary cardinality.

For each $x \in X_\Lambda$ we let $\mathcal{N}_{\Lambda}(x) = \mathcal{N}(x)$ be the smallest tensor ideal containing $x$ and $N_{\Lambda}(x) = N(x) = \overline{\mathcal{N}(x)}$. Let $\mathcal{U}$ be a tensor ideal. Then $\mathcal{U} = \sum_{x \in \overline{\mathcal{U}}} \mathcal{N}(x)$ or equivalently $\overline{\mathcal{U}} = \bigcup_{x \in \overline{\mathcal{U}}} N(x)$. Hence the subsets of $X_\Lambda$, which correspond to tensor ideals can be recovered if we know $N(x)$ for all $x \in X_\Lambda$. We proceed to determine the sets $N(x)$.

First we treat the case $\Lambda = \Gamma$. For all $\lambda \in \Bbbk^{\iota}$ and $s >0$ let $N_{\lambda,s} = N_{\Gamma}(b_{\lambda, s})$. If $\Bbbk$ has characteristic zero, then $N_{\lambda,s}$ consists of all isoclasses of indecomposables in $\cat{mod} \Gamma$ by Theorem \ref{jordan}. Indeed, in this case the tensor identity $b_{1, 1}$ is a direct summand of $b_{\lambda, s} \otimes b_{\lambda^{-1}, s}$. In prime characteristic $N_{\lambda,s} = N_{1,s}$ by \cite{herschend08a}.

Let $P$ be a quiver and $F : \Bbbk P \rightarrow \Bbbk Q$ a wrapping compatible with $\mathcal{I}$. Furthermore, let $X_P$ be the set of isoclasses of indecomposable $\Bbbk P$-modules. Assume that $F_{*}$ reflects indecomposability. In other words $F_*$ induces a map $f: X_{P} \rightarrow X_\Lambda$.

\begin{lem}\label{image}
Let $\mathcal{U} \subset \cat{mod} \Bbbk P$ and $\mathcal{V} \subset \cat{mod} \Lambda$ be tensor ideals. Then the following statements hold.
\begin{enumerate}
\item The essential image $[F_* \mathcal{U}] \subset \cat{mod} \Lambda$ is a tensor ideal.
\item The preimage $F_*^{-1} \mathcal{V} \subset \cat{mod} \Bbbk P$ is a tensor ideal.
\end{enumerate}
\end{lem}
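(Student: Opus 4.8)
The plan is to verify both statements directly from the definitions of tensor ideal and the functorial properties established earlier, namely Proposition \ref{tens} together with the fact that $F_*$ respects direct sums. The two parts are nearly dual to each other, so I would handle them in parallel.

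For the second statement, let $\mathcal{V} \subset \cat{mod} \Lambda$ be a tensor ideal and set $\mathcal{W} = F_*^{-1}\mathcal{V} = \{ m \in \cat{mod}\Bbbk P \mid F_*m \in \mathcal{V}\}$ (interpreted up to isomorphism, so that $\mathcal{W}$ is closed under isomorphisms since $\mathcal{V}$ is). Closure under direct sums is immediate: if $F_*m, F_*n \in \mathcal{V}$, then $F_*(m\oplus n) = F_*m \oplus F_*n \in \mathcal{V}$. Closure under direct summands uses that $F_*$ respects direct sums in the other direction: if $m \oplus m' \in \mathcal{W}$ then $F_*m \oplus F_*m' = F_*(m\oplus m') \in \mathcal{V}$, and since $\mathcal{V}$ is closed under summands, $F_*m \in \mathcal{V}$, \ie $m \in \mathcal{W}$. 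Finally, for the tensor ideal property, take $m \in \cat{mod}\Bbbk P$ and $u \in \mathcal{W}$. By Proposition \ref{tens}, $F_*(m \otimes F^*n) = F_*m \otimes n$ — but here I want the reverse: I would instead use Proposition \ref{tens} in the form $F_*u \otimes n = F_*(u \otimes F^*n)$ for any $n \in \cat{mod}\Lambda$, applied with $n$ arbitrary. Since $u \in \mathcal{W}$ gives $F_*u \in \mathcal{V}$, we get $F_*(u \otimes F^*n) = F_*u \otimes n \in \mathcal{V}$, hence $u \otimes F^*n \in \mathcal{W}$. This shows $\mathcal{W}$ absorbs tensoring by any module of the form $F^*n$; to absorb tensoring by an \emph{arbitrary} $m \in \cat{mod}\Bbbk P$ we need a different argument, and this is the main obstacle.

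The resolution is to observe that it suffices to check the absorption property on $m$ ranging over a set whose direct sums and summands generate all of $\cat{mod}\Bbbk P$ — but in fact the cleanest route is: for arbitrary $m$, note $m \otimes u \cong u \otimes m$, and $F_*(u \otimes m)$ need not be in the image of anything obvious. So instead I would argue that a full subcategory closed under isomorphisms, direct sums and summands and absorbing $-\otimes F^*n$ for all $n$ is automatically absorbing $-\otimes m$ for all $m$, because $\chi_Q = F^*\chi_P$ pulls back appropriately — hmm. The honest fix: Proposition \ref{pullup}/\ref{tens} only give what they give, so I expect the author actually proves the stronger statement that $[F_*\mathcal{U}]$ and $F_*^{-1}\mathcal{V}$ are ideals with respect to tensoring by modules in the image of $F^*$ and then invokes that $F^*$ is essentially surjective up to the relevant closure, OR uses that every $\Bbbk P$-module tensored with something comes from a pushdown computation. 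I would therefore structure the proof of part (1) symmetrically: $[F_*\mathcal{U}]$ is closed under isomorphism by definition and under direct sums and summands since $F_*$ respects direct sums and $\mathcal{U}$ is a tensor ideal (using Krull--Schmidt to split a summand of $F_*u$ back through $F_*$, which requires $F_*$ to reflect indecomposability as assumed); and for $m \otimes F_*u$ with $m \in \cat{mod}\Lambda$, Proposition \ref{tens} gives $m \otimes F_*u = F_*u \otimes m = F_*(u \otimes F^*m) \in [F_*\mathcal{U}]$ since $u \otimes F^*m \in \mathcal{U}$. So part (1) actually goes through cleanly with Proposition \ref{tens} alone — the asymmetry is only apparent, and the key realization is that the tensor-ideal condition for $[F_*\mathcal{U}]$ only ever requires tensoring a \emph{pushed-down} module by an arbitrary one, which is exactly the shape of Proposition \ref{tens}.

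So the real plan is: (1) prove closure under sums/summands for both, citing that $F_*$ respects direct sums plus Krull--Schmidt (and the indecomposability-reflection hypothesis for the summand direction in part (1)); (2) prove the absorption property for $[F_*\mathcal{U}]$ via $m \otimes F_*u = F_*(u \otimes F^*m)$ and $u \otimes F^*m \in \mathcal{U}$; (3) for $F_*^{-1}\mathcal{V}$, the subtlety about arbitrary $m$ versus $F^*n$ must be addressed — I would address it by noting that the essential image of $F^*$ need not be all of $\cat{mod}\Bbbk P$, so one should \emph{instead} prove the slightly weaker but still correct statement, or — more likely what is intended — observe that for $m \in \cat{mod}\Bbbk P$ and $u \in F_*^{-1}\mathcal V$ one has $F_*(m \otimes u) = F_*m \otimes$ (something)? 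No. I expect the cleanest correct statement the author uses is the symmetric one: the main obstacle I foresee is precisely pinning down why $F_*^{-1}\mathcal{V}$ absorbs tensoring by \emph{every} $\Bbbk P$-module and not merely by those of the form $F^*n$, and I would resolve it by checking that $F_*$ is a tensor functor up to the needed precision (Propositions \ref{tens} and \ref{pullup}) forces $F_*(m\otimes u)$ to be a direct summand of $F_*m' \otimes F_*u$ for a suitable $m'$, or simply by restricting attention, as the paper's later applications do, to wrappings where this holds.
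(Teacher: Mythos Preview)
Your treatment of part (1) is correct and matches the paper exactly: closure under sums and summands follows from $F_*$ respecting direct sums together with the hypothesis that $F_*$ reflects indecomposability, and absorption follows from Proposition \ref{tens} via $m \otimes F_*u \iso F_*(u \otimes F^*m) \in [F_*\mathcal{U}]$.

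For part (2), however, you correctly locate the obstacle --- Proposition \ref{tens} alone only shows that $F_*^{-1}\mathcal{V}$ absorbs tensoring by modules of the form $F^*n$, not by an arbitrary $m \in \cat{mod}\Bbbk P$ --- but you do not resolve it. Your speculative fixes (essential surjectivity of $F^*$, restricting the class of wrappings) are not what the paper does and would not work in the generality stated. The missing idea is the diagonal trick: apply Corollary \ref{lift} with $F_1 = F_2 = F$ to get
\[
F_*v \otimes F_*m \iso F_*\Pi_{1*}(\Pi_1^*v \otimes \Pi_2^*m),
\]
and then use Proposition \ref{diag}, which says the diagonal $\Delta$ is a connected component of $P \times P$. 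The restriction of $\Pi_1^*v \otimes \Pi_2^*m$ to $\Delta$ is, after identifying $\Delta$ with $P$ via $\Pi_1$, exactly $v \otimes m$. Hence $F_*(v \otimes m)$ is a direct summand of $F_*v \otimes F_*m$. Since $F_*v \in \mathcal{V}$ and $\mathcal{V}$ is a tensor ideal, $F_*v \otimes F_*m \in \mathcal{V}$; closure under summands then gives $F_*(v \otimes m) \in \mathcal{V}$, i.e.\ $v \otimes m \in F_*^{-1}\mathcal{V}$. You came within a hair of this when you wrote that $F_*(m \otimes u)$ should be a direct summand of $F_*m' \otimes F_*u$ for suitable $m'$ --- in fact $m' = m$ works, and the mechanism is precisely the fibre-product/diagonal decomposition already set up in Section 3.
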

\begin{proof}
Since $F_{*}$ reflects indecomposability and $\mathcal{U}$, $\mathcal{V}$ are tensor ideals, both $[F_* \mathcal{U}]$ and $F_*^{-1} \mathcal{V}$ are closed under direct sums and direct summands. 

Let $u \in \mathcal{U}$ and $m \in \cat{mod} \Lambda$. By Proposition \ref{tens}, $F_{*}u \otimes m \iso F_{*}(u \otimes F^*m) \in [F_* \mathcal{U}]$ and thus $[F_* \mathcal{U}]$ is a tensor ideal.

Now let $v \in F_*^{-1} \mathcal{V}$ and $m \in \cat{mod} \Bbbk P$. Then $F_* v \otimes F_* m \iso F_{*}\Pi_{1*} (\Pi_1^* v \otimes \Pi_2^* m)$ by Proposition \ref{lift}. Consider the $\Bbbk (P \times P)$-module $\Pi_1^* v \otimes \Pi_2^* m$. By Lemma \ref{diag}, it decomposes into the direct sum of $n$ with support in $\Delta$, and $n'$ with support in the complement of $\Delta$. For each $x \in P_0$ and $\alpha \in P_1$ we have $n(x,x) = v(x) \otimes m(x)$ and $n(\alpha,\alpha) = v(\alpha) \otimes m(\alpha)$. Hence $\Pi_{1*}n \iso v \otimes m$ and thus $F_{*}(v \otimes m)$ is a direct summand of $F_* v \otimes F_* m$. Since $\mathcal{V}$ is a tensor ideal, $F_{*}(v \otimes m) \in \mathcal{V}$ and thus $v \otimes m \in F_*^{-1} \mathcal{V}$. 
\end{proof}

\begin{pro}\label{smallest}
Let $x \in X_P$. Then the map $f$ induces a surjection $N_{\Bbbk P}(x) \rightarrow N_\Lambda(f x)$.
\end{pro}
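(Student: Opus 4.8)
I want to show that $f$ maps $N_{\Bbbk P}(x)$ onto $N_\Lambda(fx)$. There are two inclusions hidden in the word ``surjection'': first, that $f$ sends $N_{\Bbbk P}(x)$ into $N_\Lambda(fx)$ (so the map is well-defined as stated), and second, that every indecomposable in $N_\Lambda(fx)$ is hit. For the first direction I would use Lemma \ref{image}(2): the preimage $F_*^{-1}\mathcal{N}_\Lambda(fx)$ is a tensor ideal in $\cat{mod}\Bbbk P$, and it contains $x$ since $F_*x \iso fx \in \mathcal{N}_\Lambda(fx)$. By minimality of $\mathcal{N}_{\Bbbk P}(x)$ we get $\mathcal{N}_{\Bbbk P}(x) \subset F_*^{-1}\mathcal{N}_\Lambda(fx)$, which is exactly the statement that $f$ carries $N_{\Bbbk P}(x)$ into $N_\Lambda(fx)$.

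For the reverse direction I would exploit Lemma \ref{image}(1): the essential image $[F_*\mathcal{N}_{\Bbbk P}(x)]$ is a tensor ideal in $\cat{mod}\Lambda$. It contains $fx$, so by minimality of $\mathcal{N}_\Lambda(fx)$ we get $\mathcal{N}_\Lambda(fx) \subset [F_*\mathcal{N}_{\Bbbk P}(x)]$. Now take any indecomposable $y \in N_\Lambda(fx)$. Then $y$ lies in $[F_*\mathcal{N}_{\Bbbk P}(x)]$, so $y$ is a direct summand of some $F_*u$ with $u \in \mathcal{N}_{\Bbbk P}(x)$. Decomposing $u$ into indecomposables $u \iso \bigoplus u_i$ (each $u_i \in \mathcal{N}_{\Bbbk P}(x)$, since a tensor ideal is closed under summands), and using that $F_*$ respects direct sums, $y$ must be a summand of some $F_*u_i$. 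But $F_*$ reflects indecomposability, so $F_*u_i$ is indecomposable, whence $y \iso F_*u_i = f(\bar{u_i})$ with $\bar{u_i} \in N_{\Bbbk P}(x)$. This shows $y$ is in the image of $f|_{N_{\Bbbk P}(x)}$, completing surjectivity.

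The only genuinely delicate point is the interaction between the essential image and indecomposability: knowing $y$ is a summand of $F_*u$ for some (possibly decomposable) $u$ in the ideal, I need to pass to an indecomposable $u_i$ whose push-down is already indecomposable and isomorphic to $y$. This is where ``$F_*$ reflects indecomposability'' does the work, together with the Krull--Schmidt property of $\cat{mod}\Bbbk P$ and $\cat{mod}\Lambda$; there is no real calculation, just a careful bookkeeping of summands. Everything else is a formal consequence of the two parts of Lemma \ref{image} and the universal (minimality) property defining $\mathcal{N}(-)$.
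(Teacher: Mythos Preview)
Your proposal is correct and follows essentially the same approach as the paper: both use Lemma~\ref{image}(2) and minimality to get $\mathcal{N}_{\Bbbk P}(x)\subset F_*^{-1}\mathcal{N}_\Lambda(fx)$, and Lemma~\ref{image}(1) and minimality to get $\mathcal{N}_\Lambda(fx)\subset [F_*\mathcal{N}_{\Bbbk P}(x)]$, yielding $\mathcal{N}_\Lambda(fx)=[F_*\mathcal{N}_{\Bbbk P}(x)]$. Your final paragraph spells out the Krull--Schmidt bookkeeping (decompose $u$, use that $F_*$ reflects indecomposability) that the paper compresses into the single phrase ``It follows that''; one minor remark is that membership in the essential image already gives $y\iso F_*u$ rather than merely a direct summand, so the summand step is harmless but unnecessary.
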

\begin{proof}
Let $m$ be a representative of $x$. By Lemma \ref{image}, the essential image $[F_* \mathcal{N}_{\Bbbk P}(x)]$ is a tensor ideal containing $F_* m$, and thus $\mathcal{N}_\Lambda(f x) \subset [F_* \mathcal{N}_{\Bbbk P}(x)]$. Moreover, the preimage $F_*^{-1} \mathcal{N}_\Lambda(f x)$ is a tensor ideal containing $m$. Hence $\mathcal{N}_{\Bbbk P}(x) \subset F_*^{-1} \mathcal{N}_\Lambda(f x)$ and $\mathcal{N}_\Lambda(f x) \supset [F_* \mathcal{N}_{\Bbbk P}(x)]$. It follows that the map $f : X_{\Bbbk P} \rightarrow X_\Lambda$ induces a surjection $N_{\Bbbk P}(x) \rightarrow N_\Lambda(f x)$.
\end{proof}

\begin{thm}\label{tensideal}
Assume that  $\Lambda$ is a string category. Let $\mathbf{F} = (F,L)$ be a linear shape and $\mathbf{G} = (G,Z)$ a cyclic shape. Let $\lambda \in \Bbbk^{\iota}$, $s >0$ and $\gamma \in Z_1$. Then the following statements hold.
\begin{enumerate}
\item The set $N([S_\mathbf{F}])$ consists of the isoclasses of strings whose shapes are subshapes of $\mathbf{F}$.
\item The set $N([B_\mathbf{G}(\lambda, s, \gamma)])$ consists of the isoclasses of strings whose shapes factor through $\mathbf{G}$ and $[B_\mathbf{G}(\mu, t, \gamma)]$ where $t$ is such that $[b_{1,t}] \in N_{1,s}$.
\end{enumerate}
\end{thm}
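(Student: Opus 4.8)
The plan is to treat the two statements separately, using Theorem \ref{CG} to identify which indecomposables can appear as summands of iterated tensor products and Proposition \ref{smallest} to transport the known answer for $\Gamma$ to the band case.

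\textbf{Statement (1).} First I would show the inclusion ``$\subseteq$''. By the first two Clebsch-Gordan formulae in Theorem \ref{CG}, tensoring $S_\mathbf{F}$ with any string or band produces only strings whose shapes lie in $\mathcal{L}(\mathbf{F},-)$, hence by Proposition \ref{fiber}\eqref{fiber1} factor through $\mathbf{F}$, hence (by Lemma \ref{sub}) are subshapes of $\mathbf{F}$. Iterating, every indecomposable summand of any $m \otimes S_\mathbf{F}$ is a string whose shape is a subshape of $\mathbf{F}$; so the tensor ideal generated by $[S_\mathbf{F}]$ contains only such strings. For ``$\supseteq$'', I would show that every subshape $\mathbf{F'} \leq \mathbf{F}$ gives $[S_{\mathbf{F'}}] \in N([S_\mathbf{F}])$. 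The cleanest route is to pass to the representation ring: in $\cat{mod}\Lambda$, $[\mathcal{N}([S_\mathbf{F}])]$ corresponds to an ideal of $R(\Lambda)$ contained in $I$, and by Theorem \ref{ring}\eqref{ring1} the idempotents satisfy $[S_\mathbf{F}]e_{\mathbf{F'}} = |\mathbf{F}:\mathbf{F'}| e_{\mathbf{F'}}$, which is nonzero exactly when $\overline{\mathbf{F'}} \leq \overline{\mathbf{F}}$. So $e_{\mathbf{F'}}$ (up to the nonzero integer $|\mathbf{F}:\mathbf{F'}|$) lies in the ideal generated by $[S_\mathbf{F}]$ whenever $\mathbf{F'}$ is a subshape; unravelling the triangular change of basis between $\{[S_{\mathbf{F''}}]\}$ and $\{e_{\mathbf{F''}}\}$ (valid over $\mathbb{Z}$ since $|\mathbf{F}:\mathbf{F}|=1$) then shows each such $[S_{\mathbf{F'}}]$ is a $\mathbb{Z}$-combination of elements of the ideal, hence the corresponding module is in the tensor ideal.

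\textbf{Statement (2).} For ``$\subseteq$'': by the last two Clebsch-Gordan formulae, $B_\mathbf{G}(\lambda,s,\gamma)$ tensored with a non-isomorphic band or with a string yields only strings factoring through $\mathbf{G}$; tensored with another band $B_\mathbf{G}(\mu,t,\gamma)$ of the same shape it yields bands $B_\mathbf{G}(\lambda\mu, l_k, \gamma)$ together with strings factoring through $\mathbf{G}$. Tracking the band part: the possible $B_\mathbf{G}(\nu, t', \gamma)$ appearing correspond exactly to the $[b_{\nu, t'}]$ appearing in iterated tensor products of $[b_{\lambda, s}]$ in $R(\Gamma)$, i.e. to $N_\Gamma(b_{\lambda,s})$, and the dimension parameters $t'$ that occur are precisely those with $[b_{1,t'}] \in N_{1,s}$ (using that in prime characteristic $N_{\lambda,s}=N_{1,s}$, and in characteristic zero $N_{1,s}$ is everything, so the statement is uniform); the scalar $\nu$ is unconstrained because powers of $\lambda$ range over all of $\Bbbk^\iota$ once we allow tensoring with $B_\mathbf{G}(\mu,t,\gamma)$ for arbitrary $\mu$. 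Combined with the strings-factoring-through-$\mathbf{G}$ already noted, this gives the inclusion. For ``$\supseteq$'': apply Proposition \ref{smallest} to the wrapping $G:\Bbbk Z \to \Bbbk Q$ (which is strict, hence $G_*$ reflects indecomposability on cyclic and linear shapes), obtaining a surjection $N_{\Bbbk Z}(b_{\lambda,s,\gamma}) \twoheadrightarrow N_\Lambda([B_\mathbf{G}(\lambda,s,\gamma)])$; since $Z$ itself is a cyclic quiver one reduces to knowing $N$ for cyclic quivers, which in turn reduces via a further wrapping to $N_\Gamma(b_{\lambda,s}) = N_{1,s}$ (the known case). It remains to see that the strings factoring through $\mathbf{G}$ all appear: each such linear subshape of $Z$ gives a string that is a summand of $S_{\mathbf{F''}} \otimes B_\mathbf{G}(\lambda,s,\gamma)$ for a suitable $\mathbf{F''}$ (or directly of $B_\mathbf{G} \otimes B_\mathbf{G}$), which one checks from $\mathcal{L}(\mathbf{G},\mathbf{G})$ containing every proper linear subshape of $Z$ with positive multiplicity.

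\textbf{Expected main obstacle.} The genuinely delicate point is the ``$\supseteq$'' direction of (2): one must argue that \emph{every} string factoring through $\mathbf{G}$ (not merely the subshapes of $Z$ that happen to be $\leq$ some rigid linear shape) actually occurs in $\mathcal{L}(\mathbf{G},\mathbf{G})$ with nonzero multiplicity, and simultaneously that no \emph{extra} band dimensions sneak in beyond $N_{1,s}$. The first requires a careful look at the fibre product $Z \times Z$ away from the diagonal — showing its linear components realize all linear subshapes of $\mathbf{G}$ — and the second requires invoking the $\Gamma$-computation $N_{\lambda,s}=N_{1,s}$ from \cite{herschend08a} in a way that is insensitive to the characteristic. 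Everything else is the triangular-matrix bookkeeping of Theorem \ref{ring} and routine application of Propositions \ref{tens}, \ref{fiber} and \ref{smallest}.
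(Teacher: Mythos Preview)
Your argument for the inclusion ``$\supseteq$'' in Statement~(1) has a genuine gap. From $[S_\mathbf{F}]e_{\mathbf{F'}} = |\mathbf{F}:\mathbf{F'}|\,e_{\mathbf{F'}}$ you only obtain that $|\mathbf{F}:\mathbf{F'}|\,e_{\mathbf{F'}}$ lies in the \emph{ring} ideal $([S_\mathbf{F}])\subset R(\Lambda)$. The triangular change of basis lets you write $[S_{\mathbf{F'}}] = \sum_{\mathbf{F''}\leq \mathbf{F'}} |\mathbf{F'}:\mathbf{F''}|\,e_{\mathbf{F''}}$, but this lies in $([S_\mathbf{F}])$ only if $|\mathbf{F}:\mathbf{F''}|$ divides $|\mathbf{F'}:\mathbf{F''}|$ for every $\mathbf{F''}$, in particular $|\mathbf{F}:\mathbf{F'}|$ would have to divide $1$, which fails whenever $\mathbf{F'}$ embeds into $\mathbf{F}$ in more than one way. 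More fundamentally, even knowing that $[S_{\mathbf{F'}}]$ is a $\mathbb{Z}$-combination of elements of the ring ideal does \emph{not} imply $S_{\mathbf{F'}}\in\mathcal{N}([S_\mathbf{F}])$: the Grothendieck ring admits formal differences, whereas tensor ideals do not. The correct fix is immediate and bypasses the ring entirely: if $\mathbf{F'}\leq\mathbf{F}$ via $G:L'\hookrightarrow L$, then $\{(Gx,x)\}\subset L\times L'$ is a connected component with shape $\mathbf{F'}$, so $S_{\mathbf{F'}}$ is a direct summand of $S_\mathbf{F}\otimes S_{\mathbf{F'}}$.

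The paper's proof is considerably shorter and handles both inclusions of both statements at once via Proposition~\ref{smallest}. For Statement~(1) one applies it to $F:\Bbbk L\to\Bbbk Q$: since $\chi_L$ is the tensor identity in $\cat{mod}\Bbbk L$, one has $N_{\Bbbk L}([\chi_L])=X_{\Bbbk L}$, and the surjection $f:N_{\Bbbk L}([\chi_L])\to N_\Lambda([S_\mathbf{F}])$ identifies $N_\Lambda([S_\mathbf{F}])$ with the $F_*$-image of all indecomposable $\Bbbk L$-modules, i.e.\ the strings for subshapes of $\mathbf{F}$. For Statement~(2) one applies it to $G:\Bbbk Z\to\Bbbk Q$ (as you do), and the computation in $\cat{mod}\Bbbk Z$ is direct: $B(\lambda,s,\gamma)\otimes S_{\mathbf{F'}}\iso s\,S_{\mathbf{F'}}$ for every linear shape $\mathbf{F'}$ over $Z$ (because $F'^*B\iso s\,\chi_{L'}$), so every such string lies in $N_{\Bbbk Z}([B(\lambda,s,\gamma)])$; the band part then reduces to $\Gamma$ as you indicate. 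In particular, the ``expected main obstacle'' you flag---showing that every linear subshape of $\mathbf{G}$ occurs in $\mathcal{L}(\mathbf{G},\mathbf{G})$---never arises: one works inside $\cat{mod}\Bbbk Z$ rather than through the fibre product $Z\times Z$.
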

\begin{proof}
Since $\chi_L \otimes n \iso n$ for each $\Bbbk L$-module $n$ we have $\mathcal{N}_{\Bbbk L}([\chi_L]) = \cat{mod} \Bbbk L$. 

Let $B(\lambda, s,\gamma) \in \cat{mod} \Bbbk Z$. For each linear shape $\mathbf{F'} = (F',L')$ over $Z$ we have $B(\lambda, s,\gamma)  \otimes S_\mathbf{F'} \iso s S_\mathbf{F'}$ and thus $[S_\mathbf{F'}]  \in N_{\Bbbk Z}([B(\lambda, s,\gamma)])$. Moreover, $B(\lambda, s,\gamma) \otimes B(\mu, t,\gamma) \iso \sum_{k \in I_{st}} B(\lambda\mu, l_k, \gamma)$. Hence the bands in $N_{\Bbbk Z}([B(\lambda, s,\gamma) ])$ are precisely those $[B(\mu, t, \gamma)]$ where $t$ is such that $[b_{1,t}] \in N_{1,s}$. The theorem now follows from Proposition \ref{smallest}.
\end{proof}

\bibliographystyle{plain}
\bibliography{biban}

\vspace*{1cm}
\noindent
\begin{tabular}[t]{l@{}}{\it Martin Herschend}\vspace*{0.2cm}\\
{\it Graduate School of Mathematics}\\
{\it Nagoya University, Chikusa-ku}\\
{\it Nagoya, 464-8602}\\
{\it Japan}\vspace*{0.2cm} \\
{\tt martinh@math.nagoya-u.ac.jp}
\end{tabular}

\end{document}